\theoremstyle{plain}
\newtheorem{theorem}{Theorem}[section]
\newtheorem{corollary}[theorem]{Corollary}
\newtheorem{lemma}[theorem]{Lemma}
\newtheorem{proposition}[theorem]{Proposition}
\theoremstyle{definition}
\newtheorem{remark}[theorem]{Remark}
\newcommand{\cc}[1]{\mathcal{#1}}
\renewcommand{\P}{\mathbb{P}}
\newcommand{\E}{\mathbb{E}}
\newcommand{\R}{\mathbb{R}}
\newcommand{\ET}{\mathcal{E}_T}
\newcommand{\CT}{\CC([0,T]; \CC(\T^2))}
\newcommand{\ETw}{\mathcal{E}_{\tau_\eps}}
\newcommand{\T}{\mathbb{T}}
\newcommand{\Z}{\mathbb{Z}}
\newcommand{\eps}{\varepsilon}
\newcommand{\N}{\mathbb{N}}
\newcommand{\bes}[3]{\mathcal{B}_{#1, #2}^{#3}}
\newcommand{\besn}[0]{\mathcal{B}_{p, r}^{s}}
\newcommand{\filt}[2]{\cc F_{#1}^{#2}} 
\newcommand{\set}[1]{\left\{#1\right\}} 
\newcommand{\ob}[1]{\left(#1\right)} 
\newcommand{\cb}[1]{\left[#1\right]} 
\newcommand{\abs}[1]{\left\vert#1\right\vert} 
\newcommand{\norm}[1]{\left\|#1\right\|}
\newcommand{\wick}[1]{{:}#1{:}}
\def\CC{\mathcal{C}}
\def\CM{\mathcal{M}}
\def\eref#1{(\ref{#1})}
\def\d{\partial}
\title{Triviality of the 2D stochastic Allen-Cahn equation}
\begin{document}
\author{M. Hairer$^1$, M.D. Ryser$^2$, and H. Weber$^3$}
\institute{University of Warwick, \email{M.Hairer@Warwick.ac.uk}
\and Duke University, \email{ryser@math.duke.edu}
\and University of Warwick, \email{H.Weber@warwick.ac.uk}}

\maketitle

\begin{abstract}
We consider the stochastic Allen-Cahn equation driven by mollified
space-time white noise. We show that, as the mollifier is removed, the 
solutions converge weakly to $0$, independently of the initial condition.
If the intensity of the noise simultaneously converges to $0$ at a sufficiently fast
rate, then the solutions converge to those of the deterministic equation.
At the critical rate, the limiting solution is still deterministic, but it exhibits
an additional damping term.
\end{abstract}

\section{Introduction}\label{intro}

We consider the following evolution equation on the two-dimensional torus $\T^2$:
\begin{equ}[e1]
du= \ob{\Delta u +u -u^3} dt + \sigma dW\;,\quad u(0)=u^0\;. \tag{$\Phi$}
\end{equ}
Here $u^0$ is a suitably regular initial condition, $\sigma$ a positive constant, and $W$ an $L^2(\T^2)$-valued cylindrical Wiener process defined on a probability space $(\Omega, \filt{}{}, \P)$. In other words, at least at a formal level, ${dW\over dt}$ is space-time white noise.

This equation and variants thereof have a long history. The deterministic part of the equation is the $L^2$
gradient flow of the Ginzburg-Landau free energy functional
\begin{equ}
\int_{\T^2}\Bigl({1\over 2}|\nabla u(x)|^2 + V(u(x))\Bigr)\,dx\;,
\end{equ}
with the potential energy $V$ given by the standard double-well function $V(u) = {1\over 4}(u^2-1)^2$, 
see \cite{GL}. This provides a phenomenological model for the evolution of an order
parameter describing phase coexistence in a system without preservation of mass.
At large scales, the dynamic of phase boundaries is known to converge to the mean curvature flow \cite{allen1979microscopic,MeanCurv,MeanCurv2}.

The noise term $\sigma dW$ accounts for thermal fluctuations at positive temperature. On a formal level the choice of space-time white noise is natural, because it satisfies the right fluctuation-dissipation relation. At least for finite-dimensional gradient flows it is natural to take the bilinear form that determines the mechanism of energy dissipation as covariance of the noise, as this guarantees the invariance of the right Gibbs measure under the dynamics. Naively extending this observation to the current infinite dimensional context yields \eqref{e1}. 

White noise driven equations such as \eqref{e1} are known to be ill-posed in space-dimension $d \geq2$ \cite{walsh1986introduction,da1992stochastic}.  Actually,  the linearised version of \eqref{e1} (simply remove the term $u^3$) admits only distribution-valued solutions for $d \ge 2$.  For any $\kappa>0$ these solutions take values in the Sobolev space $H^{  \frac{2 - d}{2} - \kappa}$, but they do not take values in $H^{\frac{2 - d}{2}}$. In general, it is impossible to apply nonlinear functions to elements of these spaces and the standard approach to construct solutions of \eqref{e1} \cite{da1992stochastic, hairer2009introduction}  fails.

In the present article, we introduce a cutoff at spatial lengths of order $\eps$ and we
study the limit as $\eps \to 0$ for finite noise strength for \eqref{e1}. More precisely, we set
\begin{equ}
W_\eps(t)=\sum_{|k| \leq 1/\epsilon} e_k \beta_k(t), \qquad \eps>0\;,
\end{equ}
where $\set{e_k}_{k\in\Z^2}$ is the Fourier basis on $\T^2$, and $\set{\beta_k}_{k\in\Z^2}$ are 
complex Brownian motions that are i.i.d.\ except for the reality condition $\bar \beta_k = \beta_{-k}$. 
We thus consider
\begin{equ}[e103]
du_\eps= \ob{\Delta u_\eps +u_\eps -u_\eps^3} dt + \sigma(\eps)\, dW_\eps\;,\quad 
u_\eps(0)=u^0\;, \tag{${\Phi}_\eps$}
\end{equ}
and study the weak limit of $u_\eps$ as $\eps \to 0$.

The main result of this article can loosely be formulated as follows (a precise statement  will be given in Theorems~\ref{main_thm1} and \ref{main_thm2} below):
\begin{theorem}\label{main_main_thm_1}
Let $\sigma$ be bounded and such that $\lim_{\eps\to0}\sigma^2(\eps) \log 1/\eps =  \lambda^2 \in [0,+\infty]$.
If $\lambda^2 = +\infty$, then $u_\eps$ converges weakly to $0$, in probability. Otherwise, 
it converges weakly in probability to the solution $w_\lambda$ of
\begin{equ}\label{det_alt}
\partial_t w_\lambda  = \Delta w_\lambda - \ob{{\textstyle\frac{3}{8\pi}}\lambda^2-1} w_\lambda - w_\lambda^3\;,\quad
w_\lambda(0) =u^0\;. \tag{$\Psi_\lambda$}
\end{equ}
\end{theorem}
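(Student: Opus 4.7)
The natural strategy is a Da Prato--Debussche decomposition combined with Wick renormalisation, in the spirit of the $\Phi^4_2$ construction. Let $Z_\eps$ be the stationary solution of the mollified linear equation
\begin{equ}
dZ_\eps = (\Delta-1)Z_\eps\,dt + \sigma(\eps)\,dW_\eps\;,
\end{equ}
and write $u_\eps = v_\eps + Z_\eps$; the resulting shift $Z_\eps(0)$ in the initial condition is of size $O(\sigma(\eps))$ in any negative-regularity space and contributes an asymptotically vanishing error. Setting $C_\eps := \E Z_\eps(x)^2$ (independent of $x$ by stationarity) and introducing the Wick powers $\wick{Z_\eps^2} := Z_\eps^2 - C_\eps$, $\wick{Z_\eps^3} := Z_\eps^3 - 3 C_\eps Z_\eps$, the remainder $v_\eps$ satisfies
\begin{equ}
\partial_t v_\eps = \Delta v_\eps + (1 - 3 C_\eps) v_\eps - v_\eps^3 - 3 v_\eps^2 Z_\eps - 3 v_\eps \wick{Z_\eps^2} - \wick{Z_\eps^3} + (1 - 3C_\eps) Z_\eps\;.
\end{equ}
A direct Fourier-series computation with the paper's normalisation of $W_\eps$ gives $C_\eps = \sigma^2(\eps)\log(1/\eps)/(8\pi) + O(\sigma^2(\eps))$, so under the hypothesis $\sigma^2(\eps)\log(1/\eps)\to\lambda^2$ we have $3C_\eps\to 3\lambda^2/(8\pi)$, precisely the linear coefficient in~\eqref{det_alt}.

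\textbf{Stochastic inputs.} Writing $Z_\eps = \sigma(\eps)\tilde Z_\eps$ with $\tilde Z_\eps$ the unit-intensity O--U process and using $\wick{Z_\eps^k} = \sigma(\eps)^k\wick{\tilde Z_\eps^k}$, Gaussian hypercontractivity combined with the classical Fourier second-moment bound for Wick polynomials on the 2d torus yields
\begin{equ}
\E\norm{Z_\eps}_{\CC([0,T];\CC^{-\kappa})}^p = O(\sigma^p(\eps))\;,\quad \E\norm{\wick{Z_\eps^k}}_{\CC([0,T];\CC^{-\kappa})}^p = O(\sigma^{kp}(\eps))\;,
\end{equ}
for $k\in\{2,3\}$ and every $\kappa,p>0$. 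When $\lambda^2<\infty$ the hypothesis forces $\sigma(\eps)\to 0$, so each of $Z_\eps$, $\wick{Z_\eps^2}$, $\wick{Z_\eps^3}$ converges to $0$ in probability in $\CC([0,T];\CC^{-\kappa}(\T^2))$.

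\textbf{Passage to the limit when $\lambda^2<\infty$.} Let $w_\eps$ solve the deterministic truncated equation $\partial_t w_\eps = \Delta w_\eps + (1-3C_\eps)w_\eps - w_\eps^3$ with $w_\eps(0) = u^0$; since the coefficient $1-3C_\eps$ is eventually bounded, continuous dependence on the coefficient gives $w_\eps \to w_\lambda$ uniformly on $[0,T]\times\T^2$. For the difference $r_\eps = v_\eps - w_\eps$ (vanishing at $t=0$), the monotonicity of the cubic $-v_\eps^3$ provides dissipation, and Bony paraproduct decompositions of $v_\eps\wick{Z_\eps^2}$ and $v_\eps^2 Z_\eps$ (pairing positive-regularity factors with the negative-regularity Wick powers) allow one to close a Gronwall estimate for $\norm{r_\eps}_{L^q}$ uniformly in $\eps$ on the whole interval $[0,T]$; one deduces $r_\eps\to 0$ in probability, hence $u_\eps = r_\eps + w_\eps + Z_\eps \to w_\lambda$.

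\textbf{Singular case and main obstacle.} For $\lambda^2=+\infty$ the coefficient $1-3C_\eps\to-\infty$ delivers arbitrarily strong linear damping; a direct energy estimate on $u_\eps$, balancing the It\^o correction $\sigma^2(\eps)\cdot\#\{|k|\leq 1/\eps\}$ against the dominant damping $-3C_\eps u_\eps$ and using the $-u_\eps^3$ term, forces $u_\eps\to 0$ in the appropriate weak topology. The principal technical difficulty lies in the subcritical step: \emph{global-in-time} convergence uniform in $\eps$ cannot be obtained from a short-time fixed-point argument of $\Phi^4_2$ type (whose lifetime would collapse as $3C_\eps$ grows), so one must instead exploit the sign of the Allen--Cahn cubic nonlinearity $-v_\eps^3$ directly in an energy estimate to absorb the negative-regularity paraproducts $v_\eps\wick{Z_\eps^2}$ and $v_\eps^2 Z_\eps$ uniformly on $[0,T]$, which is the main departure from the standard $\Phi^4_2$ well-posedness theory.
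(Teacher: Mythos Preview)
Your treatment of the subcritical regime $\lambda^2<\infty$ is essentially the same as the paper's: a Da~Prato--Debussche split around the $(\Delta-1)$--Ornstein--Uhlenbeck process, vanishing of $Z_\eps$ and its Wick powers because $\sigma(\eps)\to 0$, and a stability estimate for the remainder. The paper compares $v_\eps$ directly to $w_\lambda$ using Besov multiplicative inequalities and a stopping-time argument rather than going through your intermediate $w_\eps$ and a paraproduct/Gronwall scheme, but these are equivalent routes. (Minor correction: the linear forcing on $v_\eps$ is $(2-3C_\eps)Z_\eps$, not $(1-3C_\eps)Z_\eps$.)

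The case $\lambda^2=+\infty$, however, has a genuine gap. With your fixed linear operator $\Delta-1$, when $\sigma$ is bounded away from $0$ (e.g.\ $\sigma\equiv 1$) the processes $Z_\eps$ and $\wick{Z_\eps^k}$ do \emph{not} vanish; they converge to the massive free field and its Wick powers. Hence the forcing term $(2-3C_\eps)Z_\eps$ in your $v_\eps$ equation \emph{diverges}, and neither the cubic dissipation nor the linear damping $(1-3C_\eps)v_\eps$ can absorb it in an energy estimate: after Young's inequality you are left with a term of order $C_\eps\|Z_\eps\|_{H^{-\kappa}}^2$, which does not go to zero. Your alternative ``direct energy estimate on $u_\eps$'' fares no better: the It\^o correction is of order $\sigma^2(\eps)\eps^{-2}$, whereas the damping coefficient you invoke is only $C_\eps\sim\sigma^2(\eps)\log(1/\eps)$; the balance you describe does not hold, and indeed $\|u_\eps\|_{L^4}$ diverges, so no $L^p$ energy method can detect the purely weak (negative-regularity) convergence to zero.

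The paper's key idea, which your proposal misses, is to put the diverging constant \emph{into the linear operator}: one sets $A_\eps=\Delta-(C_\eps-1)$ and defines the stochastic convolution $z_\eps$ with respect to $A_\eps$, choosing $C_\eps$ via the self-consistent fixed-point equation $C_\eps=3D_\eps^2$ where $D_\eps^2$ is the variance of the $A_\eps$--invariant Gaussian. Two things then happen simultaneously. First, the Wick calculus becomes exact, so the equation for $v_\eps=u_\eps-z_\eps$ has \emph{no} residual forcing term of the type $(2-3C_\eps)z_\eps$; one gets simply $\partial_t v_\eps=A_\eps v_\eps-\bigl(v_\eps^3+3v_\eps^2 z_\eps+3v_\eps\wick{z_\eps^2}+\wick{z_\eps^3}\bigr)$. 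Second, because $C_\eps\to\infty$, the invariant Gaussian concentrates and $z_\eps,\wick{z_\eps^2},\wick{z_\eps^3}\to 0$ in $\bes{p}{r}{s}$, while the semigroup $e^{tA_\eps}$ contracts so strongly that the constants in the mild-formulation estimates (your ``fixed-point lifetime'' worry) go to zero rather than blowing up. A short stopping-time argument then closes on the whole interval $[\delta,T]$. This self-consistent choice of the mass is precisely the mechanism that makes the $\lambda^2=+\infty$ case tractable, and it is absent from your proposal.
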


\begin{remark}
The result for constant $\sigma$ was conjectured in \cite{ryser2011well}, based on numerical simulations.
\end{remark}

\begin{remark}
The borderline case $\lambda \neq 0$ is particularly interesting as it provides an example of stochastic damping: in the limit as $\eps \to 0$, the stochastic forcing is converted into an additional deterministic damping term, $-\frac{3}{8\pi}\lambda^2 w_\lambda$, to the Allen-Cahn equation. 
In particular, if $\lambda^2 > {8\pi\over3}$, the zero-solution becomes globally attracting.   
\end{remark}

\begin{remark}
Recently, there has been a lot of interest in \eqref{e1} in the regime where the noise is small \cite{kohn2007action, BovierAC10, Cerr11}. There, the authors studied \eqref{e1} in arbitrary space dimension on the level of large deviation theory. As in \eqref{e103} they consider a modified version of \eqref{e1} where the noise term $dW$ is replaced by a noise term $dW_\eps$ with a finite spatial correlation length $\eps$. For this modified equation, solutions can be constructed in a standard way and a large deviation principle \`a la Freidlin-Wentzell can be obtained. 
One can then show that the rate functionals converge as $\eps \to 0$. 
The large deviation principle however is \textit{not} uniform in $\eps$;
this procedure corresponds to taking the amplitude of the noise much smaller than $\eps$.
The results obtained in this article quantify how small the noise should be as a function of $\eps$ in order for the 
solutions of \eqref{e1} to be close to the deterministic equation.
\end{remark}

\begin{remark}
We believe that the weak convergence to $0$ as $\eps \to 0$ actually holds for a much larger class
of potentials. Actually, one would expect it to be true whenever $\lim_{|u| \to \infty} V''(u) = +\infty$.
The proof given in this article does however depend crucially on the fact that $V(u) \sim u^4$ for
large values of $u$.
\end{remark}

The main tools used in our proofs are provided by the theory of stochastic quantisation. Actually, in the context of Euclidean Quantum Field Theory the question of existence  of the formal invariant measure of \eqref{e1} has been treated in the seventies (see e.g. \cite{GliJaf87}). Then, it had been observed that this measure, the so called $\Phi^4_2$ field, can be defined, but only if a logarithmically diverging lower order term is subtracted. The corresponding stochastic dynamical system (i.e. the renormalised version of \eqref{e1}) has also been constructed \cite{ParisiWu81, AlbRock91, da2003strong}. Note that although this renormalised equation,
\begin{equ}
du= \ob{\Delta u +u -\wick{u^3}} dt + \sigma dW\;,
\end{equ}
formally resembles \eqref{e1} it does not have a natural interpretation as a phase field model.

Our main argument is a modification of the construction provided in \cite{da2003strong}. We present here a brief heuristic argument for the case $\sigma \equiv 1$. First, let $C_\eps>1$ and add and subtract the term $C_\eps u_\eps$ to (\ref{e103}) to get
\begin{equ}\label{e3hu}
du_\eps= \ob{\Delta u_\eps - \ob{C_\eps -1} u_\eps -u_\eps \ob{u_\eps^2- C_\eps}} dt + dW_\eps\;.
\end{equ}
The key idea is to choose $C_\eps$ in such a way that, for small values of $\eps$, the term
$u_\eps \ob{u_\eps^2 - C_\eps}$ is equal to the 
Wick product $\wick{u_\eps^3}$ with respect to the Gaussian structure given by the invariant measure of the linearised system (which itself depends on $C_\eps$).
Since, given the results in \cite{da2003strong}, one would expect $\wick{u_\eps^3}$ 
to at least remain bounded as $\eps \to 0$, it is  not surprising that
the additional strong damping term $-C_\eps u_\eps$ causes the solution to vanish in the limit.

\subsection*{Acknowledgements}

{\small
MH acknowledges financial support by the EPSRC trough grant EP/D071593/1 and the 
Royal Society through a Wolfson Research Merit Award. Both MH and HW were supported by the Leverhulme Trust through 
a Philip Leverhulme Prize. MDR is grateful to P.F. Tupper and N. Nigam for fruitful discussions, and acknowledges financial support from a Hydro-Qu\'{e}bec Doctoral Fellowship. All three authors are grateful for the relaxed atmosphere at the Newton institute, where this 
collaboration was initiated during the 2010 ``Stochastic PDEs'' programme.
}

\section{Notations and Main Result}\label{main}

In order to formulate our results, we first introduce the class of Besov spaces that we will work with. As in \cite{da2003strong} we choose to work in Besov spaces, because they satisfy the right multiplicative inequalities (see Lemma \ref{cor10}). Denote by $\ob{\cdot, \cdot}$ the $L^2$ inner product, and  by $\set{e_k(x)=\frac{1}{2\pi} e^{ikx}}_{k \in \Z^2}$ the corresponding orthonormal Fourier basis. Throughout the article, we work with periodic Besov spaces $\besn(\T^2)$, where $p,r \geq 1$ and $s\in \R$. These spaces are defined as the closure of $C^\infty(\T^2)$ under the norm
\begin{equ}\label{e3abc}
\norm{u}_{\bes{ p}{r}{s}(\T^2)}:= \Bigl( \sum_{q=0}^\infty 2^{qrs} \, \norm{\Delta_q u}_{L^p(\T^2)}^r\Bigr)^{1/r},
\end{equ}
where the $\Delta_q$ are the Littlewood-Paley projection operators given by $\Delta_0 u= \ob{e_0, u} e_0$ and
\[ \Delta_q u=\sum_{2^{q-1}\leq \abs{k} < 2^q} \ob{e_k,u} e_k, \qquad q\geq1. \] 
Regarding the exponents appearing in these Besov spaces, we will restrict ourselves throughout this article
to exponents $p$, $r$ and $s$ such that
\begin{equ}[e:assExp1]
p\ge 4\;,\quad  r\geq 1\;,\quad
-\frac{2}{7p} < s <0\;.
\end{equ}

We now reformulate Theorem~\ref{main_main_thm_1} more precisely. The case $\lambda^2 = +\infty$ is given
by the following:

%

\begin{theorem}\label{main_thm1}
Assume   $u^0\in \besn$ such that (\ref{e:assExp1}) holds. 
Then for all $\eps>0$ and $T>0$, there exists a unique mild solution $u_\eps$.
If $\sigma(\eps)$ is bounded uniformly in $\eps$ and satisfies $\lim_{\eps\to 0} \sigma^2(\eps) \log(1/\eps) =+\infty$ 
then, for all $\delta\in(0,T)$, 
$\lim_{\eps\to 0} \norm{u_\eps}_{\CC([\delta ,T];\besn)}= 0$ in probability.
\end{theorem}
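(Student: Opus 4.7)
The plan is to implement the Da Prato--Debussche ``shift'' trick sketched in Section~\ref{intro}, keeping track of quantitative $\eps$-dependencies. For each fixed $\eps>0$ the truncated noise $W_\eps$ lives in a finite-dimensional subspace, so existence and uniqueness of a mild solution in $\CC([0,T];\besn)$ follow from a standard Picard iteration together with an energy estimate that uses the coercivity of $-u_\eps^3$ to preclude blow-up; the Besov exponents fixed in \eqref{e:assExp1} are tailored to make the multiplicative inequalities below work.

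For the main (weak-convergence-to-zero) statement, add and subtract $C_\eps u_\eps$ to rewrite the equation as \eqref{e3hu} with $\sigma(\eps)$ in front of the noise. Choose $C_\eps>0$ self-consistently as $C_\eps = 3 a_\eps$, where
\begin{equ}
a_\eps = \frac{\sigma^2(\eps)}{(2\pi)^2}\sum_{0<|k|\le 1/\eps} \frac{1}{2(|k|^2+C_\eps)}
\end{equ}
is the stationary pointwise variance of the Ornstein--Uhlenbeck process $dz_\eps = (\Delta-C_\eps) z_\eps \,dt + \sigma(\eps)\,dW_\eps$. With this choice the shifted nonlinearity $u_\eps^3 - C_\eps u_\eps$ equals the Wick cube $\wick{u_\eps^3}$ with respect to the Gaussian law of $z_\eps$. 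A direct evaluation of the sum yields $C_\eps \sim \frac{3}{2\pi}\sigma^2(\eps)\log(1/\eps)$, so the assumption $\sigma^2(\eps)\log(1/\eps)\to\infty$ forces $C_\eps\to\infty$. Decomposing $u_\eps = z_\eps + v_\eps$ (with $z_\eps(0)=0$) one finds that $v_\eps$ solves the random PDE
\begin{equ}
\partial_t v_\eps = \bigl(\Delta-(C_\eps-1)\bigr) v_\eps - v_\eps^3 - 3\wick{z_\eps^2}\,v_\eps - 3 z_\eps v_\eps^2 + z_\eps - \wick{z_\eps^3}, \qquad v_\eps(0)=u^0.
\end{equ}

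The next step is a set of stochastic estimates: by explicit Gaussian computation in Fourier space, $z_\eps$, $\wick{z_\eps^2}$ and $\wick{z_\eps^3}$ tend to $0$ in $L^q(\Omega;\CC([0,T];\bes{p}{r}{s}))$ (respectively in negative-index Besov norms for the Wick powers) for every admissible $(p,r,s)$ and every $q\ge 1$. The mechanism is that the spectral density $\sigma^2(\eps)/(|k|^2+C_\eps)$ is dominated at low frequencies by the large damping $C_\eps$, while the negative regularity $s<0$ controls the high-frequency end, yielding an overall smallness factor of order $\sigma^2(\eps) C_\eps^{s}\to 0$.

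The crux is then the deterministic a priori estimate showing that $\|v_\eps\|_{\CC([\delta,T];\besn)}\to 0$ in probability. Write $v_\eps$ via Duhamel using the semigroup $e^{t(\Delta-(C_\eps-1))}$; the exponential prefactor $e^{-(C_\eps-1)(t-s)}$ contributes an effective gain of $1/(C_\eps-1)\to 0$, while the multiplicative Besov inequalities of Section~\ref{main} reduce the products $\wick{z_\eps^2}\,v_\eps$, $z_\eps v_\eps^2$ and $v_\eps^3$ to norms that are already under control. The main obstacle, which I expect to be the real technical work, is the term $3 z_\eps v_\eps^2$: it is quadratic in $v_\eps$ with a coefficient whose strong norms are not small, and so it cannot be absorbed into the damping by a naive bootstrap. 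The resolution should be to use the coercive term $-v_\eps^3$ not only to provide an $L^4$-type a priori bound after heat smoothing at $t=\delta$, but actively inside the estimate via a Besov-space analogue of the pointwise splitting $|3 z_\eps v_\eps^2|\le\tfrac12 v_\eps^4 + C z_\eps^2 v_\eps^2$, converting the dangerous quadratic term into one dominated by the small quantity $z_\eps$ plus a piece absorbed on the left. Combined with $\|z_\eps\|_{\CC([0,T];\besn)}\to 0$, this yields the theorem.
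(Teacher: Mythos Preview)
Your overall architecture is exactly that of the paper: choose $C_\eps$ so that $u_\eps^3-C_\eps u_\eps=\wick{u_\eps^3}$, split $u_\eps=z_\eps+v_\eps$, show that $z_\eps$ and its Wick powers vanish in $L^p([0,T];\besn)$, and control $v_\eps$ via Duhamel with the damped semigroup $e^{tA_\eps}$. Where your proposal goes wrong is in the last paragraph, both in diagnosing the obstacle and in the cure you suggest.

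First, the term $3 z_\eps v_\eps^2$ is \emph{not} the hard one. In the multiplicative inequality (Lemma~\ref{cor10}) the factor playing the role of $h$ sits in the \emph{negative} space $\besn$, while the factors $g_j$ sit in $\bes{p}{r}{\bar s}$ with $\bar s>0$. In $z_\eps v_\eps^2$ one takes $h=z_\eps$ and $g_1=g_2=v_\eps$; the norm $\|z_\eps\|_{L^p([0,T];\besn)}$ does go to $0$ (your own stochastic estimate), so this term carries a small prefactor already, on top of the gain $K_\eps\to0$ from the semigroup (Lemma~\ref{lemma_imp}). The genuinely delicate term is $v_\eps^3$, which has no small stochastic coefficient and gives a cubic in $\|v_\eps\|_{\ET}$.

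Second, your proposed fix would fail. A pointwise splitting like $|3 z_\eps v_\eps^2|\le\tfrac12 v_\eps^4 + C z_\eps^2 v_\eps^2$ does not transfer to bounds in negative-regularity Besov spaces, and more fatally it introduces the \emph{non-Wick} square $z_\eps^2=\wick{z_\eps^2}+D_\eps^2$ with $D_\eps^2=C_\eps/3\to\infty$, so you would be trading a benign term for a divergent one. There is no way to use the sign of $-v_\eps^3$ ``inside'' a mild/Besov estimate of this type.

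What the paper does instead is close the bootstrap by a stopping-time argument (Proposition~\ref{e58}). One sets $\tau_\eps^\delta = T\wedge\inf\{t\ge\delta:\|v_\eps\|_{\mathcal E_t^\delta}\ge1\}$, so that on $[\delta,\tau_\eps^\delta]$ all powers $\|v_\eps\|^l$ with $l\le3$ are bounded by $1$. The Duhamel estimate (Lemma~\ref{e57}) then reads
\begin{equ}
\|v_\eps\|_{\mathcal E_{\tau_\eps^\delta}^\delta}\le e^{-\delta C_\eps}\|u^0-z_\eps(0)\|_{\besn}+K_\eps T^\gamma\sum_{l=0}^3\|\wick{z_\eps^{3-l}}\|_{L^p([0,\tau_\eps^\delta];\besn)}\;,
\end{equ}
where every factor on the right tends to $0$: the initial-data term by the damping $e^{-\delta C_\eps}$, the $l\le2$ terms by the stochastic smallness of the Wick powers, and the $l=3$ term (where $\wick{z_\eps^0}=1$) by $K_\eps\to0$ alone. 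Hence $\E\|v_\eps\|_{\mathcal E_{\tau_\eps^\delta}^\delta}\to0$, which forces $\P(\tau_\eps^\delta<T)\to0$ and the conclusion follows. The key point you are missing is that the semigroup gain $K_\eps\to0$ applies to \emph{every} Duhamel term, including the cubic one, and the stopping time turns the superlinear bootstrap into a linear one; no sign information on the nonlinearity is needed at this stage.
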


On the other hand, when $\sigma^2(\eps) \log(1/\eps)$ converges to a finite limit, we have

\begin{theorem}\label{main_thm2}
Assume   $u^0\in \besn$ such that (\ref{e:assExp1}) holds. 
If $\lim_{\eps\to 0} \sigma^2(\eps) \log(1/\eps) = \lambda^2 \in \R$ 
then, for all $\delta\in(0,T)$, 
$\lim_{\eps\to 0} \norm{u_\eps - w_\lambda}_{\CC([\delta ,T];\besn)}= 0$ in probability,
where $w_\lambda$ is the unique solution to $($\ref{det_alt}$)$.
\end{theorem}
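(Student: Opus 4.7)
The plan is to adapt the Da Prato--Debussche decomposition used in the proof of Theorem~\ref{main_thm1}, this time tracking the renormalization constant to its precise finite limit $3\lambda^2/(8\pi)$ rather than letting it diverge.

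I would first introduce the stationary solution $z_\eps$ of the linear SPDE
\begin{equ}
dz_\eps = \ob{\Delta - 1} z_\eps\,dt + \sigma(\eps)\,dW_\eps\;,
\end{equ}
and write $u_\eps = v_\eps + z_\eps$. Denote by $c_\eps := \E\cb{z_\eps(t,x)^2}$ the spatial variance (constant in $x$ by translation invariance), and by $\wick{z_\eps^2} := z_\eps^2 - c_\eps$, $\wick{z_\eps^3} := z_\eps^3 - 3c_\eps z_\eps$ the Wick powers. A direct Fourier-space computation expresses $c_\eps$ as a Riemann sum over modes $\abs{k}\leq 1/\eps$ and gives $3c_\eps \to \tfrac{3}{8\pi}\lambda^2$ as $\eps \to 0$. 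Expanding $u_\eps^3 = (v_\eps + z_\eps)^3$ and subtracting the equation for $z_\eps$, the remainder $v_\eps$ satisfies
\begin{equ}
\partial_t v_\eps = \Delta v_\eps + \ob{1-3c_\eps}v_\eps - v_\eps^3 - 3v_\eps^2 z_\eps - 3v_\eps\wick{z_\eps^2} - \wick{z_\eps^3} + \ob{2-3c_\eps}z_\eps\;,
\end{equ}
whose linear coefficient $1-3c_\eps$ converges to the drift coefficient $1-\tfrac{3}{8\pi}\lambda^2$ appearing in $(\Psi_\lambda)$.

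Next I would show that $z_\eps$, $\wick{z_\eps^2}$, and $\wick{z_\eps^3}$ each converge to $0$ in $\CC([0,T];\besn)$ in probability. Since $\sigma^2(\eps)\log(1/\eps)$ is bounded we have $\sigma(\eps) \to 0$, and the first limit follows from standard parabolic estimates. For the Wick powers, which belong to the $k$-th homogeneous Wiener chaos, Gaussian hypercontractivity reduces the $L^p$ moment bounds to $L^2$ Fourier-space estimates on the Littlewood--Paley blocks $\Delta_q\wick{z_\eps^k}$; the negative regularity exponent $s < 0$ in (\ref{e:assExp1}) absorbs the logarithmic growth of the pointwise variance of $\wick{z_\eps^k}$, yielding vanishing $\besn$-norms in probability.

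The core of the proof is then the uniform-in-$\eps$ well-posedness and stability of the mild formulation for $v_\eps$. Using the Besov multiplicative inequalities of Lemma~\ref{cor10} to control the products $v_\eps^2 z_\eps$ and $v_\eps\wick{z_\eps^2}$, and exploiting the dissipative term $-v_\eps^3$ to close a priori estimates, one shows that $v_\eps$ remains bounded in $\CC([\delta,T];\besn)$ uniformly in $\eps$ and depends continuously on the stochastic data $(z_\eps, \wick{z_\eps^2}, \wick{z_\eps^3}, c_\eps)$. Passing to the limit, $v_\eps$ converges to the unique mild solution of
\begin{equ}
\partial_t w = \Delta w + \Bigl(1-\tfrac{3}{8\pi}\lambda^2\Bigr) w - w^3\;,\quad w(0)=u^0\;,
\end{equ}
which is exactly $w_\lambda$. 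Since $z_\eps \to 0$, we conclude $u_\eps = v_\eps + z_\eps \to w_\lambda$ in $\CC([\delta,T];\besn)$. The main obstacle is this final stability step: the forcing $\wick{z_\eps^2}$ is only distribution-valued with negative regularity, so the product $v_\eps \wick{z_\eps^2}$ requires sharp Besov product inequalities that close only within the precise exponent range (\ref{e:assExp1}), and the cubic damping $-v_\eps^3$ must be used crucially to absorb the nonlinear contributions and produce the necessary a priori bounds.
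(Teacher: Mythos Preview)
Your outline follows essentially the same route as the paper: the decomposition $u_\eps = v_\eps + z_\eps$ with $z_\eps$ the stationary solution of the linear equation with generator $\Delta-1$, the identification $3c_\eps \to \tfrac{3}{8\pi}\lambda^2$, the vanishing of the Wick powers, and the convergence $v_\eps \to w_\lambda$ are all exactly as in Section~\ref{sec:det}. Two points of imprecision are worth flagging. First, the paper only needs (and only proves) $\wick{z_\eps^n}\to 0$ in $L^p([0,T];\besn)$ for $n\ge 2$, not in $\CC([0,T];\besn)$ as you claim; the $L^p$ convergence is what feeds into Lemma~\ref{lemma_imp}. Second, and more substantively, the stability step is \emph{not} closed by exploiting the sign of $-v_\eps^3$: in the $\besn$ framework with $s<0$ there is no obvious energy-type inequality in which the cubic acts as a damping term. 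Instead the paper writes the mild equation for $\rho_\eps := v_\eps - w_\lambda$, introduces a stopping time $\tau_\eps = \bar T \wedge \inf\{t:\|\rho_\eps\|_{\mathcal{E}_t}\ge 1\}$, uses the boundedness of the deterministic $w_\lambda$ to control $\|v_\eps\|$ on $[0,\tau_\eps]$, chooses $\bar T$ small enough to absorb the term $T^\delta\|\rho_\eps\|_{\ET}(\|v_\eps\|^2+\|w_\lambda\|^2)$ into the left-hand side, and then iterates. The cubic's dissipativity enters only indirectly, in the proof of global existence of $v_\eps$ (Lemma~\ref{huhu}), where classical $\CC(\T^2)$ theory applies after the solution has smoothed.
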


 \begin{remark}
If $\sigma$ decays sufficiently fast, for example $\sigma(\eps)\sim \eps^{\tau}$ for some $\tau>0$, then the conclusion of Theorem \ref{main_thm2} actually holds in the space of space-time continuous functions.
 \end{remark}

To conclude this section, we introduce some concepts borrowed from the theory of stochastic quantization. Since we are not concerned with the dynamics of quantised fields, we only introduce the notions necessary for the proof techniques used below, and refer to \cite{da2007wick} for a general introduction to the topic. Consider the linear version of equation (\ref{e3hu}), namely
\begin{equ}\label{eee123}
dz_\eps= \ob{\Delta z_\eps - \ob{C_\eps -1} z_\eps} dt + \sigma(\eps)dW_\eps\;.
\end{equ}
For $C_\eps > 1$, this equation has a unique invariant measure on $L^2(\T^2)$, which we denote by $\mu_\eps$.
It is $\mu_\eps$ that will play the role of the ``free field'' in the present article.
Under $\mu_\eps$, the $k$th Fourier component of $z_\eps$ 
is a centred complex Gaussian random variable with variance ${\sigma^2(\eps)\over 2}\ob{C_\eps -1 +|k|^2}^{-1}$. 
Furthermore, distinct Fourier components are independent, except for the reality condition
$z_\eps(-k) = \overline{z_\eps(k)}$.

As a consequence of translation invariance, one has the identity
\begin{equs}\label{rho}
D_\eps^2 &:= \int_{L^2} |\phi_\eps(x)|^2\mu_\eps(d\phi_\eps) = \ob{\frac{1}{2\pi}}^2 \int_{L^2} \norm{\phi_\eps}_{L^2}^2 \mu_\eps(d\phi_\eps)\\
&= \frac{1}{8\pi^2} {\sum_{\abs{k}\leq 1/\eps} \frac{\sigma^2(\eps)}{C_\eps -1 +\abs{k}^2}}\;.
\end{equs}
We then \textit{define} the 
Wick powers of any field $u_\eps$ with respect to the Gaussian structure given by $\mu_\eps$
by
\begin{equ}
\wick{u_\eps^n} = D_\eps^n H_n(u_\eps/D_\eps)\;,
\end{equ}
where $H_n$ denotes the $n$th Hermite polynomial. In this article, we will only ever use the Wick
powers for $n \le 3$, for which one has the identities
\begin{equ}\label{wickpower}
\wick{u_\eps^1}=u_\eps\;,\qquad \wick{u_\eps^2}= u_\eps^2-D_\eps^2\;,\qquad
\wick{u_\eps^3}=u_\eps^3-3\,D_\eps^2\,u_\eps\;.
\end{equ} 
From now on, whenever we use the notation $\wick{u_\eps^n}$, (\ref{wickpower}) is what we refer to.
For any two expressions $A$ and $B$ depending on $\eps$, we will throughout this article use 
the notation $A \lesssim B$ to
mean that there exists a constant $C$ independent of $\eps$ (and possibly 
of other relevant parameters clear from
the respective contexts) such that $A \le C\, B$. 

\section{Trivial limit for strong noise}\label{proof1}
 
In this section, we provide the proof of Theorem~\ref{main_thm1}. First, in Subsection~\ref{step1}, we 
explain the ``correct'' choice of the renormalization constant $C_\eps$ in (\ref{e3hu}). In Subsection~\ref{step2},
we then obtain bounds on the linearised equation, as well as its Wick powers. Finally,
in Subsection~\ref{step3}, we obtain a bound on the remainder and we combine these results in order to conclude.
 
\subsection{Fixing the renormalization constant}\label{step1}
For $C_\eps>1$, we rewrite (\ref{e103}) as  
\begin{equ}[e21]
du_\eps= \ob{A_\eps u_\eps -u_\eps(u_\eps^2-C_\eps)} dt +\sigma(\eps)dW_\eps\;,
\end{equ}
where the linear operator $A_\eps$ is given by $ A_\eps = \Delta  -(C_\eps -1)$.
Motivated by the heuristic arguments provided in Section \ref{intro}, the goal of this section is to determine $C_\eps$ in such a way that the nonlinear term $u_\eps(u_\eps^2-C_\eps)$ is equal to the Wick product $\wick{u_\eps^3}$. 
It then follows from 
 (\ref{rho}) and  (\ref{wickpower}) that $C_\eps$ is implicitly determined by the equation
\begin{equ}\label{e11}
C_\eps= 3D_\eps^2 = \frac{3}{8\pi^2} \, \,\sum_{|k|\leq 1/\eps} \frac{\sigma^2(\eps)}{C_\eps - 1 + \abs{k}^2}\;.
\end{equ}
To describe the behavior of the solution to (\ref{e11}), we shall use the notation $A_\eps \sim B_\eps$ to mean $\lim_{\eps\to 0} A_\eps/B_\eps =1$.

\begin{lemma}\label{lemma1}
For any values of the parameters, equation (\ref{e11}) has a unique solution $C_\eps > 1$. 
If $\sigma$ is uniformly bounded and such that $\lim_{\eps \to 0} \sigma^2(\eps)\log (1/\eps) = \infty$,
then one has  
\begin{equ}\label{e12}
C_\eps \sim 
\frac{3}{4\pi}\, \sigma^2(\eps)\log{\frac{1}{\eps}}\;.
\end{equ}
In particular, $\lim_{\eps\to0} C_\eps = +\infty$.
\end{lemma}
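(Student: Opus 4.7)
The plan is to treat the existence/uniqueness and the asymptotic statement separately; the first is a one-line monotonicity argument, while the second requires a careful integral comparison followed by a bootstrap.

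For existence and uniqueness, I would fix $\eps > 0$ and let
$$F_\eps(C) = \frac{3\sigma^2(\eps)}{8\pi^2}\sum_{|k|\le 1/\eps}\frac{1}{C-1+|k|^2}.$$
This is a smooth, strictly decreasing function of $C$ on $(1,\infty)$; the $k=0$ term forces $F_\eps(C)\to+\infty$ as $C\to 1^+$, while each term tends to $0$ as $C\to\infty$. Consequently $G_\eps(C)=C-F_\eps(C)$ is strictly increasing from $-\infty$ to $+\infty$ on $(1,\infty)$, and has a unique zero.

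For the asymptotic behaviour, the key is to approximate the sum $S_\eps(C) := \sum_{|k|\le 1/\eps}(C-1+|k|^2)^{-1}$ by the integral
$$\int_{|x|\le 1/\eps}\frac{dx}{C-1+|x|^2} = \pi\log\!\left(1+\frac{1}{(C-1)\eps^2}\right).$$
A standard Riemann-sum comparison, carried out by pairing each lattice point $k\ne 0$ with the unit square $k+[-1/2,1/2]^2$ and using the monotonicity of $(C-1+|x|^2)^{-1}$ in $|x|$, yields
$$S_\eps(C) = \pi\log\!\left(1+\frac{1}{(C-1)\eps^2}\right) + O(1),$$
where the $O(1)$ is uniform over $C\ge 1+\delta$ for any fixed $\delta>0$. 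Whenever $(C-1)\eps^2\to 0$, this becomes $S_\eps(C)= 2\pi\log(1/\eps)-\pi\log(C-1)+O(1)$.

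It remains to plug in $C=C_\eps$ and bootstrap. First, using the crude bound $S_\eps(C)\le 2\pi\log(1/\eps)+O(1)$ for all $C>1$ (i.e.\ taking $C=1$ in the integral upper bound, or just bounding each denominator below by $|k|^2$ for $k\ne 0$), together with the defining relation $C_\eps=\frac{3\sigma^2(\eps)}{8\pi^2}S_\eps(C_\eps)$, one obtains a crude upper bound $C_\eps\lesssim \sigma^2(\eps)\log(1/\eps)$. Since $\sigma$ is bounded, this forces $C_\eps\eps^2\to 0$ and $\log C_\eps =O(\log\log(1/\eps))=o(\log(1/\eps))$. At the same time, dropping everything but the $k=0$ term in the sum gives $C_\eps(C_\eps-1)\ge \frac{3\sigma^2(\eps)}{8\pi^2}$, which combined with the hypothesis $\sigma^2(\eps)\log(1/\eps)\to\infty$ actually allows us to extract $C_\eps\to\infty$ more cleanly from substituting the refined expansion:
$$C_\eps = \frac{3\sigma^2(\eps)}{4\pi}\log\frac{1}{\eps} - \frac{3\sigma^2(\eps)}{8\pi}\log(C_\eps-1) + O(\sigma^2(\eps)).$$
Since $\log(C_\eps-1)=o(\log(1/\eps))$ and $\sigma^2(\eps)=o(\sigma^2(\eps)\log(1/\eps))$, the right-hand side is $(1+o(1))\,\frac{3}{4\pi}\sigma^2(\eps)\log(1/\eps)$, establishing \eqref{e12}, and in particular $C_\eps\to\infty$.

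The main obstacle is really just the bookkeeping in the bootstrap: one must ensure that the correction $\log(C_\eps-1)$ is indeed negligible compared to $\log(1/\eps)$, which requires first extracting a crude (at most logarithmic in $1/\eps$) a priori bound on $C_\eps$ before applying the refined integral expansion. The integral comparison itself is routine, but it should be phrased carefully so that the $O(1)$ error is genuinely uniform in $\eps$ and in the shift $C-1$ (for $C-1$ bounded away from zero, which is what the bootstrap will supply).
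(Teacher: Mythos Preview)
Your approach is essentially the same as the paper's: monotonicity for uniqueness, integral approximation of the lattice sum, a crude a priori upper bound $C_\eps\lesssim\sigma^2(\eps)\log(1/\eps)$ to ensure $\log C_\eps=o(\log(1/\eps))$, then the refined expansion to conclude. The paper packages the Riemann-sum comparison into a separate lemma with an explicit error $O(a^{-1/2})$, but the content is what you describe.

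There is one small gap worth flagging. You note correctly that the $O(1)$ in your expansion $S_\eps(C)=\pi\log\bigl(1+\frac{1}{(C-1)\eps^2}\bigr)+O(1)$ is only uniform for $C-1$ bounded away from $0$, and you say ``the bootstrap will supply'' this lower bound. But the ingredients you actually list do not supply it: the $k=0$ bound $C_\eps(C_\eps-1)\ge\frac{3\sigma^2(\eps)}{8\pi^2}$ is useless when $\sigma(\eps)\to 0$, and the crude upper bound is the wrong direction. The paper closes this loop directly: evaluating the right side of the fixed-point equation at $C=2$ gives $\frac{3\sigma^2(\eps)}{8\pi^2}\sum_{|k|\le 1/\eps}(1+|k|^2)^{-1}\sim\frac{3}{4\pi}\sigma^2(\eps)\log(1/\eps)\to\infty$, so by monotonicity the fixed point satisfies $C_\eps\ge 2$ for all small $\eps$. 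Adding this one sentence makes your argument complete.
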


Before we proceed to the proof of this result, we state the following very useful result:

\begin{lemma}\label{lem:intLog}
Let $a, R \ge 1$. Then there exists a constant $C$ such that the bound
\begin{equ}[e:RHS]
\Bigl| \sum_{|k| \le R} {1\over a+|k|^2} - \pi \log \Bigl( 1 + {R^2 \over a}\Bigr)\Bigr| \le {C\over \sqrt a} \Bigl(1 \wedge {R \over \sqrt a}\Bigr) \;,
\end{equ}
holds. Here, the sum goes over elements $k \in \Z^2$.
\end{lemma}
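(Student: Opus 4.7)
The integral $\int_{|x| \le R} \frac{dx}{a + |x|^2}$ can be computed exactly in polar coordinates to give $\pi \log(1 + R^2/a)$, so the task reduces to estimating the Riemann-sum error between $S := \sum_{|k|\le R} f(k)$ and $I := \int_{|x|\le R} f(x)\,dx$, where $f(x) = (a+|x|^2)^{-1}$.

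The natural decomposition is to tile $\R^2$ with unit cells $Q_k := k + [-\tfrac12,\tfrac12]^2$ centered at lattice points, and split
\begin{equ}
S - I \;=\; \underbrace{\sum_{|k|\le R}\int_{Q_k}\!\bigl(f(k)-f(x)\bigr)\,dx}_{=: E_1} \;+\; \underbrace{\Bigl(\sum_{|k|\le R}\int_{Q_k} f(x)\,dx - \int_{B_R} f(x)\,dx\Bigr)}_{=: E_2}.
\end{equ}
For $E_1$ the idea is Taylor expansion around the center: the linear correction $\nabla f(k)\cdot(x-k)$ integrates to zero over $Q_k$ by symmetry, so that
\begin{equ}
\Bigl|\int_{Q_k}(f(k)-f(x))\,dx\Bigr| \;\lesssim\; \|\nabla^2 f\|_{L^\infty(Q_k)} \;\lesssim\; \frac{1}{(a+|k|^2)^2}\;,
\end{equ}
the second bound coming from the explicit formula for $\nabla^2 f$. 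For $E_2$, the region of integration and the union $\bigcup_{|k|\le R} Q_k$ differ by a set of area $O(R)$ lying in the annulus $\{R-\sqrt 2 \le |x|\le R+\sqrt 2\}$, so $|E_2|\lesssim R/(a+R^2)$.

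To obtain the desired bound I would then split into the two regimes according to whether $R\le \sqrt a$ or $R>\sqrt a$. If $R\le\sqrt a$, there are $O(R^2)$ lattice points in $B_R$ and each summand in $E_1$ is $\lesssim a^{-2}$, yielding $|E_1|\lesssim R^2/a^2 \le R/a$ (using $R\le a$), while $|E_2|\lesssim R/a$ as well; together this gives $|S-I|\lesssim R/a = a^{-1/2}\cdot(R/\sqrt a)$. If $R>\sqrt a$, one splits the sum in $E_1$ at $|k|=\sqrt a$: the inner piece has $O(a)$ terms of size $a^{-2}$, contributing $O(a^{-1})$, while the outer piece is bounded by comparison with $\int_{\sqrt a}^\infty r(a+r^2)^{-2}\,dr = O(a^{-1})$; adding the boundary bound $|E_2|\lesssim 1/R\le 1/\sqrt a$ gives $|S-I|\lesssim 1/\sqrt a$.

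The main (minor) obstacle is purely bookkeeping: one must be careful that the Taylor bound $\|\nabla^2 f\|_{L^\infty(Q_k)}\lesssim (a+|k|^2)^{-2}$ holds uniformly, including for the single cell $Q_0$ where $k=0$, and that the matching between the two regimes is sharp enough to give the combined factor $(1\wedge R/\sqrt a)$. The hypothesis $a,R\ge 1$ is used precisely to absorb $O(1)$ constants (the $1/a$ term arising from the inner-region sum, and the finite number of corner cases near the boundary) into the target $R/a$ bound.
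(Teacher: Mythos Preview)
Your argument is correct, but the paper takes a shorter and more elementary route. Rather than Taylor expanding on centred cells and exploiting the cancellation of the linear term, the paper uses only the \emph{monotonicity} of the one-variable function $x\mapsto (a+x^2)^{-1}$. Restricting to one quadrant and comparing the sum over $k_i>0$ with the sum over $k_i\ge 0$ sandwiches the quarter-integral between them, so the full error is controlled by the contribution of the coordinate axes:
\begin{equ}
|S-I| \;\le\; 4\sum_{k=0}^{\lfloor R\rfloor}\frac{1}{a+k^2} \;\le\; \frac{4}{a} + 4\int_0^R \frac{dx}{a+x^2} \;=\; \frac{4}{a} + \frac{4}{\sqrt a}\arctan\Bigl(\frac{R}{\sqrt a}\Bigr)\;,
\end{equ}
and the arctangent already carries the factor $(1\wedge R/\sqrt a)$, while $1/a$ is absorbed using $a,R\ge 1$.

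In short: both proofs are valid. The paper's monotonicity trick is quicker and avoids any Hessian computation or case split on $R\lessgtr\sqrt a$, at the cost of being specific to radially decreasing summands. Your Euler--Maclaurin style argument is slightly longer but more robust: it would continue to work for summands that are not monotone, since it only needs a pointwise bound on $\nabla^2 f$.
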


\begin{proof}
The second expression on the left is nothing but $\int_{|k| \le R} {dk \over a+|k|^2}$, so we
want to bound the difference between the sum and the integral. Using the monotonicity and positivity of the
function $x \mapsto {1\over a+x^2}$ and restricting ourselves to one quadrant, 
we see that one has the bounds
\begin{equ}
 \sum_{|k| \le R \atop k_i > 0} {1\over a+|k|^2} \le {1\over 4}\int_{|k| \le R} {dk \over a+|k|^2} \le \sum_{|k| \le R \atop k_i \ge 0} {1\over a+|k|^2}\;.
\end{equ}
As a consequence, the required error is bounded by
\begin{equ}
4 \sum_{k = 0}^{\lfloor R\rfloor}  {1\over a+k^2}\le {4\over a} + 4 \int_0^R {dx \over a+x^2} \;.
\end{equ}
The required bound follows at once, using the fact that $a$ and $R$ are bounded away from $0$ by assumption.
\end{proof}

\begin{proof}[Proof of Lemma~\ref{lemma1}]
Since  the right hand side decreases from $\infty$ down to $0$ as the left hand side grows from $1$ to $\infty$, it follows immediately that (\ref{e11}) always has a unique solution $C_\eps > 1$.

Since, by Lemma~\ref{lem:intLog}, one has $\sum_{|k| \le {1\over \eps}} \frac{1}{1+ |k|^2} \sim 2\pi \log{1\over \eps}$
and since by assumption $\sigma^2(\eps) \log{1\over \eps} \to \infty$, there exists $\eps_0$ such that
\begin{equ}[e:simpleBound]
\frac{3}{8\pi^2} \sum_{|k|\leq 1/\eps} \frac{\sigma^2(\eps)}{1 + \abs{k}^2} \ge 2\;,
\end{equ}
for all $\eps < \eps_0$. As a consequence, we have $C_\eps \ge 2$ for such values of $\eps$,
and we will use this bound from now on. On the other hand, if we know that 
$C_\eps \ge 2$, then $C_\eps$ is  bounded
\emph{from above} by the left hand side of (\ref{e:simpleBound}), so that
\begin{equ}[e:upperBound]
C_\eps \le K \sigma^2(\eps) \log{1\over \eps}\;,
\end{equ}
for some constant $K$ and for $\eps$ small enough.

It now follows from Lemma~\ref{lem:intLog} that
\begin{equ}[e:equalityCeps]
C_\eps =  {3 \sigma^2(\eps) \over 8\pi} \log{\ob{1+\frac{1}{\eps^2\ob{C_\eps -1}}}}+ R_\eps\;,
\end{equ}
for some remainder $R_\eps$ which is uniformly bounded as $\eps \to 0$.
Since, by \eref{e:upperBound}, the first term on the right hand side goes to $\infty$, this 
shows that $R_\eps$ is negligible in (\ref{e:equalityCeps}), so that
\begin{equ}
C_\eps \sim {3 \sigma^2(\eps) \over 8\pi} \log{\ob{\frac{1}{\eps^2 C_\eps}}}
= {3 \sigma^2(\eps) \over 8\pi} \Bigl( \log{1\over \eps^2} - \log C_\eps\Bigr)\;.
\end{equ}
Since $C_\eps$ is negligible with respect to $1\over \eps^2$ by \eref{e:upperBound}, the claim follows.
\end{proof}
\subsection{Bounds on the linearised equation}\label{step2}

We split the solution to (\ref{e21}) into two parts by introducing the stochastic convolution
\begin{equ}\label{e23}
z_\eps(t):=\sigma(\eps)\int_{-\infty}^t e^{(t-s)A_\eps} dW_\eps(s)\;,
\end{equ}
and performing the change of variables $v_\eps(t):= u_\eps(t) - z_\eps(t)$. With these notations, $v_\eps$
solves
\begin{equs}\label{e26} 
\partial_t v_\eps& = A_\eps v_\eps - \ob{v_\eps^3  + 3 v_\eps^2  z_\eps +3 v_\eps \wick{z_\eps^2} + \wick{z_\eps^3}}\tag{$\Phi_\eps^{{aux}}$}\\
v_\eps(0)&= u^0-z_\eps(0).
\end{equs}
We thus split the original problem into two parts: first, we show that the stochastic convolution 
converges to $0$, then we show that the remainder $v_\eps$ also converges to $0$.

By construction, the stochastic convolution (\ref{e23}) is a stationary process and its invariant measure is given by $\mu_\eps$. We first establish a general estimate for its renormalized powers $\wick{z_\eps^n}$,
which will be useful for bounding $v_\eps$ later on. Throughout this section, we assume that 
$\lim_{\eps\to 0}\sigma^2(\eps)\log(1/\eps) = \infty$ and that $C_\eps$ is given by \eref{e11}.
We then have:

\begin{lemma}\label{lemma2}
Let $r,k,p\geq1$, $s<0$. Then, for all $n\in\mathbb{N}$, we have 
\begin{align}\label{e32}
\lim_{\eps\to 0}\, \E \,\norm{\wick{z_\eps^n}}_{\bes{p}{r}{s}}^k = 0\;. 
\end{align}
\end{lemma}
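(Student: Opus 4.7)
The plan is to reduce the estimate to a second-moment computation via Gaussian hypercontractivity, and then exploit the divergence $C_\eps \to \infty$ established in Lemma~\ref{lemma1} to extract smallness in the resulting Fourier sums.

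\textbf{Step 1 (reduction to second moments).} For each $x \in \T^2$ and each $q \ge 0$, the random variable $\Delta_q \wick{z_\eps^n}(x)$ lies in the $n$-th homogeneous Wiener chaos associated with $W_\eps$. By the equivalence of moments on a fixed chaos (Nelson's hypercontractivity), for every $m \ge 1$ one has
\[
\bigl(\E |\Delta_q \wick{z_\eps^n}(x)|^m\bigr)^{1/m} \lesssim_{m,n} \bigl(\E |\Delta_q \wick{z_\eps^n}(x)|^2\bigr)^{1/2} =: \sqrt{S_q(\eps)},
\]
and by stationarity $S_q(\eps)$ does not depend on $x$. Two successive applications of Minkowski's integral inequality (interchanging $L^k(\Omega)$ with $L^p(\T^2)$, then with $\ell^r_q$), together with Jensen's inequality when $k < \max(p,r)$, reduce the lemma to showing
\[
\sum_{q \ge 0} 2^{qrs}\, S_q(\eps)^{r/2} \longrightarrow 0 \qquad \text{as } \eps \to 0.
\]

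\textbf{Step 2 (Fourier representation).} From the identity $\E[\wick{z_\eps^n}(x)\wick{z_\eps^n}(y)] = n!\, c_\eps(x-y)^n$ for the stationary covariance $c_\eps$ of $z_\eps$, together with its Fourier expansion, I would rewrite $S_q(\eps)$ as
\[
S_q(\eps) = n! \sum_{2^{q-1} \le |k| < 2^q} (\gamma_\eps^{\ast n})(k), \qquad \gamma_\eps(k) = \frac{\sigma^2(\eps)}{2(C_\eps - 1 + |k|^2)}
\]
for $|k| \le 1/\eps$ (and zero otherwise), where $\gamma_\eps^{\ast n}$ denotes the $n$-fold discrete convolution of $\gamma_\eps$.

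\textbf{Step 3 (convolution estimate and summation).} The crucial point is to extract smallness in $C_\eps$. The crude bound $\gamma_\eps^{\ast n}(k) \le \|\gamma_\eps\|_{\ell^1}^{n-1} \|\gamma_\eps\|_{\ell^\infty}$ already encodes the correct scaling: one factor provides the pointwise smallness $\|\gamma_\eps\|_{\ell^\infty} \sim \sigma^2(\eps)/C_\eps$, while each remaining factor contributes at most a logarithm via Lemma~\ref{lem:intLog}, since $\|\gamma_\eps\|_{\ell^1} \sim \sigma^2(\eps)\log(1/\eps) \sim C_\eps$. Refining this by splitting each variable $k_j$ into its bulk region $|k_j| \lesssim \sqrt{C_\eps}$ and its tail $|k_j| \gtrsim \sqrt{C_\eps}$, and tracking how the shell scale $2^q$ interacts with these, one obtains a bound of the schematic form
\[
S_q(\eps) \,\lesssim\, \sigma^{2n}(\eps)\,\bigl(1+\log(1/\eps)\bigr)^{n-1}\, h_q(\eps),
\]
where $h_q(\eps)$ grows polynomially in $2^q$ for $2^q \lesssim \sqrt{C_\eps}$ and then saturates at a constant. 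Since $s < 0$ and the Fourier support of $\wick{z_\eps^n}$ lies in $|k| \le n/\eps$, the sum in $q$ is essentially geometric and produces an overall factor of order $C_\eps^{rs/2}$, which tends to zero by Lemma~\ref{lemma1} while $\sigma$ stays bounded.

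\textbf{Main obstacle.} The technical heart of the proof is the convolution estimate in Step~3: a careless bound loses the benefit of the large mass $C_\eps$ in $A_\eps$, so one must carefully track the interplay between the shell scale $2^q$, the crossover scale $\sqrt{C_\eps}$, and the UV cutoff $1/\eps$. Once this is accomplished, the remaining summation in $q$ is routine and the conclusion follows from the asymptotic $C_\eps \sim \tfrac{3}{4\pi}\sigma^2(\eps)\log(1/\eps) \to \infty$.
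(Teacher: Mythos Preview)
Your approach is essentially the same as the paper's---both rest on Gaussian hypercontractivity to reduce to second moments and then on a Fourier/covariance computation exploiting $C_\eps \to \infty$---but the paper compresses your Steps~1--3 into a single citation. It invokes \cite[Lemma~3.2]{da2003strong} to obtain directly
\[
\E \,\norm{\wick{z_\eps^n}}_{\bes{p}{r}{s}}^k \;\lesssim\; \|\gamma_\eps\|_{H^{\beta_n}}^{kn/2}\;,\qquad \beta_n = 1+\tfrac{rks}{2^n p}<1\;,
\]
where $\gamma_\eps$ is the covariance function of $z_\eps$. The proof then reduces to the elementary observation that
\[
\|\gamma_\eps\|_{H^{\beta_n}}^2 = \sum_{|k|\le 1/\eps}\frac{\sigma^4(\eps)(1+|k|^2)^{\beta_n}}{(C_\eps-1+|k|^2)^2}\;\longrightarrow\;0\;,
\]
which follows immediately from $\beta_n<1$, boundedness of $\sigma$, and $C_\eps\to\infty$. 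This Sobolev-norm packaging bypasses your convolution analysis entirely.

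Your Step~3 is the place where the work hides, and as written it is only a sketch: the ``crude'' bound $\gamma_\eps^{\ast n}(k)\le \|\gamma_\eps\|_{\ell^1}^{n-1}\|\gamma_\eps\|_{\ell^\infty}$ loses too much (since $\|\gamma_\eps\|_{\ell^1}\sim C_\eps$, the prefactor actually \emph{grows} for $n\ge 2$), and the promised ``refinement'' via bulk/tail splitting is not carried out. It can certainly be made to work, but you would effectively be reproving the Da~Prato--Debussche lemma. The paper's route is considerably shorter and avoids tracking the interaction between the shell scale, the mass scale $\sqrt{C_\eps}$, and the cutoff.
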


\begin{proof} 
Following the calculations of the proof of \cite[Lemma 3.2]{da2003strong},
we see that 
\begin{equ}\label{tildebound}
\E \,\norm{\wick{z_\eps^n}}_{\bes{p}{r}{s}}^k  \lesssim \|\gamma_\eps\|^{kn\over 2}_{H^{\beta_n}}\;, 
\end{equ}
where $\beta_n = 1 + {r k s\over 2^{n}p}$ and 
\begin{equ}
\gamma_\eps(x) = \sum_{|k| \le 1/\eps} \frac{\sigma^2(\eps)}{C_\eps -1 + \abs{k}^2} e_k(x)\;.
\end{equ}
Since
\begin{equ}
\|\gamma_\eps\|^{2}_{H^{\beta_n}} =  \sum_{|k| \le 1/\eps} \frac{\sigma^4(\eps)(1+|k|^2)^{\beta_n}}{(C_\eps -1 + \abs{k}^2)^2}\;,
\end{equ}
and $\beta_n < 1$, the claim follows from the boundedness of $\sigma$ and the fact that $C_\eps \to \infty$.
\end{proof}

\begin{corollary}\label{aha2}
Let $n,p,r\geq 1$ and $s<0$. Then $\wick{z_\eps^n} \in L^p([0,T]; \bes{p}{r}{s})$ $\P$-a.s., for all $\eps>0$.
In particular,
\begin{align}\label{e59n}
\lim_{\eps\to0} \E \norm{\wick{z_\eps^n}}_{L^p(0,T; \besn)} = 0.
\end{align}
\end{corollary}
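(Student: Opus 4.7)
My plan is to deduce the corollary directly from Lemma~\ref{lemma2} by exploiting the fact that the stochastic convolution $z_\eps$ defined in \eref{e23} is a stationary process (having been started from $-\infty$) with one-dimensional marginal $\mu_\eps$. This stationarity transfers to each Wick power $\wick{z_\eps^n}$, since the latter is obtained by applying the deterministic Hermite polynomial $H_n(\cdot/D_\eps)$ to $z_\eps(t)$. Consequently, for every fixed $t \in [0,T]$ the law of $\wick{z_\eps^n}(t)$ in $\besn$ coincides with the law at $t=0$, and in particular the moments controlled in Lemma~\ref{lemma2} do not depend on $t$.

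With this observation in hand, the argument is essentially one line. Jensen's inequality (applied to the concave function $x \mapsto x^{1/p}$) together with Fubini's theorem give
\begin{equ}
\E \norm{\wick{z_\eps^n}}_{L^p(0,T;\besn)} \le \Bigl(\E \int_0^T \norm{\wick{z_\eps^n}(t)}_{\besn}^p\,dt\Bigr)^{1/p} = T^{1/p}\Bigl(\E \norm{\wick{z_\eps^n}(0)}_{\besn}^p\Bigr)^{1/p}\;,
\end{equ}
and the right-hand side tends to $0$ as $\eps \to 0$ by Lemma~\ref{lemma2} applied with $k = p$ (the conditions $p,r \ge 1$ and $s < 0$ are exactly those of the hypothesis of the corollary). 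This establishes \eref{e59n}.

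For the almost-sure $L^p$ integrability at fixed $\eps > 0$, one can argue in the same way: the bound above shows that $\E \norm{\wick{z_\eps^n}}_{L^p(0,T;\besn)}^p$ is finite for every $\eps > 0$, so the random variable $\norm{\wick{z_\eps^n}}_{L^p(0,T;\besn)}$ is $\P$-almost surely finite, which is the required statement that $\wick{z_\eps^n}\in L^p([0,T];\besn)$ almost surely. Alternatively, since for each $\eps > 0$ the process $z_\eps$ involves only finitely many Fourier modes $|k| \le 1/\eps$ and is almost surely continuous in time with values in a smooth finite-dimensional space, the same is true of $\wick{z_\eps^n}$, whence the Besov norm is trivially bounded.

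I expect no real obstacle here: the entire content is packaged inside Lemma~\ref{lemma2}, and the only thing to do is to trade the pointwise-in-$t$ moment bound for the time-integrated one. The only minor subtlety is to justify stationarity of $\wick{z_\eps^n}(t)$ in $t$; this follows from the stationarity of $z_\eps$ (itself a consequence of integration from $-\infty$ in \eref{e23} against a stationary-increment noise) and from the fact that the Wick power is a pointwise functional of $z_\eps(t)$ with $\eps$-dependent but $t$-independent coefficients.
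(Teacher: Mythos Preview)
Your proof is correct and follows essentially the same approach as the paper, which simply invokes stationarity of $z_\eps$, Fubini's theorem, and Lemma~\ref{lemma2}. You have merely spelled out the Jensen step and the intermediate identity $\E\norm{\wick{z_\eps^n}}_{L^p(0,T;\besn)}^p = T\,\E\norm{\wick{z_\eps^n}(0)}_{\besn}^p$ that the paper leaves implicit.
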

\begin{proof} 
This follows from the stationarity of $z_\eps$, Fubini's theorem and Lemma \ref{lemma2}. 
\end{proof}

\noindent We establish now the main result of this subsection.
\begin{proposition}\label{prop1}
Consider the stochastic convolution $z_\eps$  defined in (\ref{e23}) and let $p,r\geq 1$, $s<0$ and $T>0$. Then
\begin{equ}\label{e27}
\lim_{\eps \to 0} \E \norm{z_\eps}_{\CC([0,T]; \bes{p}{r}{s})}  = 0\;.
\end{equ}
\end{proposition}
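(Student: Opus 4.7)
Given Corollary~\ref{aha2} specialised to $n=1$, we already know that $\E\|z_\eps\|_{L^p([0,T];\bes{p}{r}{s})} \to 0$. The remaining task is to upgrade this integral-in-time estimate to a supremum-in-time one. The plan is to use the factorization method of Da Prato--Kwapie\'n--Zabczyk, which is tailored precisely to this kind of upgrade for Ornstein--Uhlenbeck-type processes.

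Fix an auxiliary $\alpha \in (1/p, 1/2)$, an interval that is nonempty under (\ref{e:assExp1}). The beta-function identity combined with the semigroup property yields the representation
\begin{equ}
z_\eps(t) = \frac{\sin \pi\alpha}{\pi}\int_{-\infty}^t (t-s)^{\alpha-1} e^{(t-s)A_\eps} Y_\eps(s)\,ds\;,
\end{equ}
where $Y_\eps(s) = \sigma(\eps)\int_{-\infty}^s (s-r)^{-\alpha} e^{(s-r)A_\eps}\,dW_\eps(r)$ is itself a stationary Gaussian process. Since $A_\eps = \Delta - (C_\eps-1)$, the semigroup acts on each Besov space $\bes{p}{r}{s}$ with operator norm bounded by $e^{-(C_\eps-1)\tau}$. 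Applying H\"older's inequality in $s$ (the choice $\alpha>1/p$ is precisely what makes $\int_0^\infty \tau^{(\alpha-1)p'}e^{-(C_\eps-1)p'\tau}\,d\tau$ finite), combined with this exponential decay to absorb the tail on $(-\infty,-1]$, yields
\begin{equ}
\sup_{t\in[0,T]}\|z_\eps(t)\|_{\bes{p}{r}{s}} \lesssim \|Y_\eps\|_{L^p([-1,T];\bes{p}{r}{s})}\;.
\end{equ}
Raising to the $p$th power, taking expectations, and using the stationarity of $Y_\eps$ together with Fubini, the problem reduces to showing $\E\|Y_\eps(0)\|_{\bes{p}{r}{s}}^p \to 0$.

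This last step is a direct analogue of Lemma~\ref{lemma2}: $Y_\eps(0)$ is a centred Gaussian field whose covariance has Fourier symbol
\begin{equ}
\sigma^2(\eps)\int_0^\infty \tau^{-2\alpha}e^{-2\tau(C_\eps-1+|k|^2)}\,d\tau = \frac{\Gamma(1-2\alpha)\,\sigma^2(\eps)}{2^{1-2\alpha}(C_\eps-1+|k|^2)^{1-2\alpha}}\;,
\end{equ}
which differs from the symbol of the covariance of $z_\eps(0)$ only by the exponent $1-2\alpha$ in the denominator in place of $1$. The Besov Gaussian moment bound used in the proof of Lemma~\ref{lemma2} then applies verbatim, reducing the matter to the vanishing of a suitable $H^\beta$-norm of this kernel, which in turn follows from the boundedness of $\sigma$ and $C_\eps\to\infty$. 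The main obstacle --- and the reason this is not completely routine --- is the double exponent constraint: $\alpha > 1/p$ is forced by H\"older's inequality, while $\alpha$ must stay small enough that the exponent $1-2\alpha$ is large enough (relative to $|s|$) to make the weighted sum defining this $H^\beta$-norm both finite \emph{and} vanishing as $\eps\to 0$. The hypothesis $p\ge 4$ in (\ref{e:assExp1}) leaves ample room in the interval $(1/p,1/2)$ to accommodate both constraints simultaneously.
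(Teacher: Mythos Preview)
Your overall strategy via the factorisation method is sound, and you have correctly identified that two constraints on $\alpha$ must be reconciled. But your punchline --- that ``$p\ge 4$ leaves ample room'' --- is false, and this is a genuine gap.

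The second constraint, that $\alpha$ be small enough for the Besov estimate on $Y_\eps$ to vanish, is in fact $\alpha < |s|/2$. Indeed, the Fourier variance of $Y_\eps$ scales like $(C_\eps-1+|k|^2)^{-(1-2\alpha)}$; pushing this through the Littlewood--Paley blocks gives $\E\|\Delta_q Y_\eps\|_{L^p}^p \lesssim \sigma^p(\eps)\, 2^{qp(2\alpha+\delta)} (C_\eps-1)^{-\delta p/2}$ for any small $\delta>0$, and summability against the weight $2^{qrs}$ forces $2\alpha+\delta < |s|$. Under \eref{e:assExp1} one has $|s|< 2/(7p)$, so $|s|/2 < 1/(7p) < 1/p$: the window $(1/p,\,|s|/2)$ is \emph{empty}. (The proposition's actual hypotheses $p\ge1$, $s<0$, which you have tacitly narrowed, make things no better.) Concretely, for $\sigma\equiv1$ and any $\alpha > |s|/2$ one checks that $\E\|Y_\eps(0)\|_{\besn}$ does not tend to zero but diverges like $\eps^{-(2\alpha-|s|)}$, so no sharper estimate can rescue the argument.

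The fix is precisely the decoupling you are missing: do \emph{not} tie the H\"older exponent to the Besov index $p$. Split off $e^{tA_\eps}z_\eps(0)$ first (handled by Lemma~\ref{lemma2}), so that factorisation runs on the finite interval $[0,t]$; then choose $\alpha\in(0,|s|/2)$ freely, use $L^q$ with some $q>1/\alpha$ in the factorisation map, and invoke Fernique's theorem to control the $q$th moment of the Gaussian $\|Y_\eps(t)\|_{\besn}$ by a power of its first moment. This is exactly how the paper proceeds.
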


\begin{proof}
We begin by decomposing the stochastic convolution into two parts,
\begin{align*}
z_\eps(t)= e^{tA_\eps} z_\eps(0)+ \sigma(\eps)\int_{0}^t\, e^{\ob{t-s}A_\eps}dW_\eps(s).
\end{align*}
The bound on the first term follows from Lemma~\ref{lemma2} and Lemma~\ref{a14},
so it remains to focus on the second term, which we denote hereafter as
$\bar{z}_\eps(t)$.
In order to bound it, we use the {\it factorization method}, see \cite[p.~128]{da1992stochastic}, as well as  \cite[p.~47]{hairer2009introduction} for a more detailed presentation. Recalling that
\begin{equ}
\int_\sigma^t \ob{t-s}^{\alpha-1} \ob{s-\sigma}^{-\alpha}ds = \frac{\pi}{\sin \pi\alpha}\;,
\end{equ}
we fix $\alpha\in(0,\frac12)$ and rewrite $\bar{z}_\eps$ as
\begin{align}\label{e27p1b}
\bar{z}_\eps(t)= \frac{\sin \pi \alpha}{\pi} \int_0^t e^{(t-s)A_\eps}\,Y_\eps(s) \,(t-s)^{\alpha-1}ds,
\end{align}
where 
\begin{align*}
Y_\eps(s):=\sigma(\eps) \int_0^s (s-\sigma)^{-\alpha}\,e^{(s-\sigma)A_\eps}  dW_\eps(\sigma).
\end{align*}
Next, we introduce the mapping $\Gamma_\eps: y \mapsto \Gamma_\eps y$ defined by \[\Gamma_\eps y(t): =\frac{\sin \pi \alpha}{\pi} \int_0^t e^{(t-s)A_\eps}\,y(s) \,(t-s)^{\alpha-1}ds,\] and show that $\Gamma_\eps: L^q([0,T]; \bes{p}{r}{s}) \to \CC([0,T]; \bes{p}{r}{s})$ is a bounded mapping for $q>1/\alpha$.  
First, it is a consequence of the strong continuity of $e^{tA_\eps}$  that $\Gamma_\eps \,y \in \CC([0,T]; \besn)$ for all  $y\in \CC([0,T]; \besn)$ such that $y(0)=0$  \cite[p.~48]{hairer2009introduction}. Next, observe that $s\mapsto  (t-s)^{\alpha-1}$ is in $L^{\bar q}(\cb{0,t})$ for all $\bar q\in[1,(1-\alpha)^{-1})$, and hence we can use H\"{o}lder's inequality to deduce that for all $q>\frac{1}{\alpha}$,
\begin{align}\label{e27p1}
\sup_{t\in[0,T]} \norm{\Gamma_\eps y \,(t)}_{\bes{p}{r}{s}}\lesssim \norm{y}_{L^q(\cb{0,T};\bes{p}{r}{s})}.
\end{align}
A standard density argument allows us to conclude that $\Gamma_\eps: L^q([0,T]; \bes{p}{r}{s}) \to \CC([0,T]; \bes{p}{r}{s})$ is indeed a bounded mapping for $q>1/\alpha$.   

To conclude the proof, we assume for the moment that there exist $K_\eps>0$ such that
\begin{equ}\label{e27p3}
\sup_{t\in \cb{0,T}} \E \norm{Y_\eps(t)}_{\besn} \leq K_\eps\;, \qquad \lim_{\eps\to 0} K_\eps =0\;.
\end{equ}
From (\ref{e27p3}), it then follows that  
\begin{align}\label{e27p3b}
\E \norm{Y_\eps}_{L^q(\cb{0,T}; \besn)} \leq \Bigl(T \sup_{t\in \cb{0,T}}\E \norm{Y_\eps}_{\besn}^q\Bigr)^{1/q}\lesssim T^{1/q} K_\eps\;,
\end{align}
where the first inequality is due to Jensen's inequality and Fubini's theorem, and the second inequality follows from (\ref{e27p3}) in conjunction with Fernique's theorem. By (\ref{e27p3b}), $Y_\eps \in L^q(\cb{0,T}; \besn)$ $\P$-a.s.\ and hence $\bar{z}_\eps = \Gamma_\eps Y_\eps \in \CC([0,T]; \besn)$ $\P$-a.s. Furthermore, it follows from (\ref{e27p1})--(\ref{e27p3b}) that 
\begin{align*}
\E \,\sup_{t\in [0,T]} \norm{\bar{z}_\eps(t)}_{\besn}& \lesssim \E \norm{Y_\eps}_{L^q(\cb{0,T}; \besn)}
 \lesssim K_\eps\;, 
\end{align*}
so that  $\norm{\bar{z}_\eps}_{\CC([0,T]; \bes{p}{r}{s})} \to 0$ in probability, as required. 

It remains to establish (\ref{e27p3}).
By definition of the Besov norm (\ref{e3abc}) and Jensen's inequality,
\begin{equ}\label{e27p3a}
\E \norm{Y_\eps(t)}_{\besn} \leq  \Bigl(\sum_{q=0}^\infty 2^{qrs}\, \E \norm{\Delta_q Y_\eps(t)}_{L^p}^r\Bigr)^{1/r}.
\end{equ} 
As a consequence, (\ref{e27p3}) follows if we can show that
\begin{equ}[e:goal]
\E \norm{\Delta_q Y_\eps(t)}_{L^p}^p \le K_\eps 2^{qp\tau}\;,
\end{equ}
for some $\tau < |s|$ and some $K_\eps \to 0$.

Fix now $q\in \N$. Thanks to Fubini's theorem, the Gaussianity of  $\Delta_q Y_\eps(t)$, 
and the independence of its different Fourier components, 
\begin{align}\label{e27p4}
\E \norm{\Delta_q Y_\eps(t)}_{L^p}^p &= \int_{\T^2}{\E \Bigl|\sum_{2^{q-1}\leq \abs{k}<2^q} \ob{Y_\eps(t), e_k}e_k(\xi)} \Bigr|^p\,d\xi \nonumber\\
&\lesssim \int_{\T^2}{\Bigl(\E \Bigl|\sum_{2^{q-1}\leq \abs{k}<2^q} \ob{Y_\eps(t), e_k}e_k(\xi)} \Bigr|^2\Bigr)^{p/2}\,d\xi\\
&\lesssim \int_{\T^2} \Bigl(\sum_{2^{q-1}\leq \abs{k}<2^q} \E \abs{\ob{Y_\eps(t), e_k}}^2\Bigr)^{p/2}\, d\xi.\nonumber
\end{align}
It\^{o}'s isometry and the definition of $A_\eps$ yield
\begin{align}\label{starbound}
\E \abs{\ob{Y_\eps(t), e_k}}^2&
\leq   \sigma^2(\eps)\cb{2\ob{C_\eps-1+\abs{k}^2}}^{2\alpha-1}\, \int_0^\infty  e^{-\tau} \tau^{-2\alpha} d\tau\nonumber\\
& \lesssim \sigma^2(\eps) \ob{C_\eps-1+\abs{k}^2}^{2\alpha-1},
\end{align}
where the last inequality is due to $2\alpha<1$. Inserting (\ref{starbound}) back into (\ref{e27p4}) we obtain
the bound
\begin{equs}
\E \norm{\Delta_q Y_\eps(t)}_{L^p}^p  &\lesssim \sigma^p(\eps)\ob{\sum_{2^{q-1} \leq \abs{k}<2^q} \ob{\frac{1}{C_\eps-1+\abs{k}^2}}^{1-2\alpha}}^{p/2}\\
&\lesssim\sigma^p(\eps)\ob{ {2^{2q\tau} \over (C_\eps-1)^\delta} \sum_{2^{q-1} \leq \abs{k}<2^q}\frac{1}{\abs{k}^{2+2\tau - 4\alpha-2\delta}}}^{p/2},
\end{equs}
which is valid for all $\tau>0$ and all $\delta \in (0, 1-2\alpha)$. 
Since we can make both $\alpha$ and $\delta$ arbitrarily small, we can in particular choose them
in such a way that $2\alpha + \delta < \tau < |s|$, so that the exponent is strictly greater than $2$.
This implies that the corresponding inverse power of $|k|$ is summable over all $k$, so that (\ref{e:goal}) is satisfied. 
\end{proof}

\subsection{Bounds on the remainder}\label{step3}

First, we need a technical lemma for the mapping $\CM^\eps$, defined as
\begin{align}\label{e58a}
\ob{\CM^\eps y}(t):= e^{t\,A_\eps} \ob{u^0-z_\eps(0)} + \int_0^t e^{(t-\tau)\, A_\eps} \, \sum_{l=0}^{3}\, a_l\,y^l(\tau) \, \wick{z_\eps^{3-l}(\tau)}\,d\tau\;,
\end{align}
where the $a_l$ are some real-valued constants.
In order to formulate the results of this section, we introduce the Banach space  
\[\ET:= \CC([0,T];\besn)\cap L^p([0,T];\bes{p}{r}{\bar s}),\] equipped with the usual maximum norm
 \begin{align}\label{max_norm}
\norm{x}_{\ET}:=\max\ob{\norm{x}_{C\ob{[0,T]; \bes{p}{r}{s}}}, \norm{x}_{L^p\ob{[0,T];\bes{p}{r}{\bar s}}}}.
\end{align}
Regarding the parameters appearing in $\ET$, we shall usually assume that
$(p,r,s,\bar s)$ satisfy the bounds
\begin{equ}[e:assExp]
p\ge 4\;,\quad  r\geq 1\;,\quad \bar s = 2s +\frac 2p\;,\quad
-\frac{2}{7p} < s <0\;.
\end{equ}

\begin{lemma}\label{e57}
Fix $\eps>0$, $T>0$, and assume \eref{e:assExp}. Then there exist positive constants $\delta$ and $K_\eps$ with $\lim_{\eps \to 0} K_\eps=0$ such that   
\begin{align}\label{e59f}
\norm{\CM^\eps y}_{\ET} \leq &\ob{1+ K_\eps\, T^{\delta}} \norm{u^0-z_\eps(0)}_{\bes{p}{r}{s}}\nonumber \\
& + K_\eps \, T^{\delta} \, \sum_{l=0}^{3} \bigl\|{\wick{z_\eps^{3-l}}}\bigr\|_{L^p([0,T];\bes{p}{r}{s})}\norm{y}^l_{\ET}.
\end{align}
\end{lemma}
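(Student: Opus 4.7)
The plan is to decompose $\CM^\eps y(t)$ as the homogeneous contribution $h_\eps(t) := e^{tA_\eps}(u^0 - z_\eps(0))$ plus, for $l = 0,1,2,3$, the forcing integrals $J_\eps^{(l)}(t) := a_l \int_0^t e^{(t-\tau)A_\eps}\, g_l(\tau)\,d\tau$ with $g_l(\tau) := y^l(\tau)\,\wick{z_\eps^{3-l}(\tau)}$, and to bound each piece in the two component norms composing $\norm{\cdot}_\ET$. Since the $\ET$-norm is a maximum, it suffices to bound each component by the right-hand side of (\ref{e59f}).

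For the homogeneous part, the factorisation $e^{tA_\eps} = e^{-(C_\eps - 1)t}\,e^{t\Delta}$ together with the contractivity of $e^{t\Delta}$ on $\besn$ immediately yields $\sup_{t \le T}\norm{h_\eps(t)}_{\besn} \le \norm{u^0 - z_\eps(0)}_{\besn}$, contributing the leading $1$ in (\ref{e59f}). For the $L^p([0,T]; \bes{p}{r}{\bar s})$ piece I would use the smoothing bound $\norm{e^{tA_\eps} f}_{\bes{p}{r}{\bar s}} \lesssim e^{-(C_\eps - 1)t}\, t^{-(\bar s - s)/2} \norm{f}_{\besn}$, which is compatible with integrability thanks to $p(\bar s - s)/2 = p(s + 2/p)/2 < 1$ (forced by $s < 0$). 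Two bounds are then available after raising to the $p$th power and integrating over $[0,T]$: the trivial one $\lesssim T^{1 - p(\bar s - s)/2}$ discarding the exponential, and a uniform-in-$T$ one $\lesssim (C_\eps - 1)^{p(\bar s - s)/2 - 1}$ obtained by extending the integral to $[0,\infty)$ via a Gamma integral. Their geometric mean gives $\norm{h_\eps}_{L^p([0,T];\bes{p}{r}{\bar s})} \lesssim K_\eps^{(0)}\,T^{\delta_0}\,\norm{u^0 - z_\eps(0)}_{\besn}$ with $\delta_0 = -s/4 > 0$ and $K_\eps^{(0)} = (C_\eps - 1)^{s/4} \to 0$ by Lemma~\ref{lemma1}.

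For each forcing integral $J_\eps^{(l)}$, I would first apply the multiplicative inequality of Lemma~\ref{cor10} to obtain, pointwise in $\tau$, an estimate of the schematic form
\[
\norm{g_l(\tau)}_{\bes{p_0}{r_0}{s_0}} \lesssim \norm{y(\tau)}^l_{\bes{p}{r}{\bar s}}\,\norm{\wick{z_\eps^{3-l}(\tau)}}_{\besn}\;,
\]
for suitable intermediate exponents $(p_0, r_0, s_0)$. The choice $\bar s = 2s + 2/p$ is tuned precisely so that, after a Besov embedding absorbing the loss of integrability from $l$-fold products, $g_l(\tau)$ can be regarded as an element of $\bes{p}{r}{s'}$ for some $s'$ close to $s$. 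I would then apply the heat-smoothing estimate again, together with H\"older's inequality in $\tau$ (exponents $p$ on $g_l$ and $p/(p-1)$ on the singular kernel $(t-\tau)^{-(\sigma - s')/2}$ with $\sigma \in \{s, \bar s\}$), and use the $L^p$-in-time bound on $\wick{z_\eps^{3-l}}$ supplied by Corollary~\ref{aha2}, to obtain in each component norm a bound of the form
\[
\norm{J_\eps^{(l)}}_{\ET} \lesssim K_\eps\,T^\delta\,\norm{y}^l_\ET\,\norm{\wick{z_\eps^{3-l}}}_{L^p([0,T];\besn)}\;,
\]
where the smallness of $K_\eps$ again comes from integrating the exponential decay $e^{-(C_\eps - 1)(t-\tau)}$ against a power singularity (yielding a negative power of $C_\eps - 1$, which tends to zero by Lemma~\ref{lemma1}), and $\delta > 0$ arises by balancing against a trivial $T$-bound exactly as in the homogeneous step. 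After harmonising constants across the two component norms and across $l = 0,1,2,3$, this gives the forcing contribution in (\ref{e59f}).

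The main technical obstacle I anticipate is the exponent bookkeeping in the forcing step. One must choose $(p_0, r_0, s_0)$ so that simultaneously (i) Lemma~\ref{cor10} applies to $l$-fold products of elements of $\bes{p}{r}{\bar s}$ multiplied by an element of $\besn$, (ii) a Besov embedding returns to a space from which the heat semigroup can reach $\besn$ or $\bes{p}{r}{\bar s}$ with an $L^{p/(p-1)}$-integrable time singularity, and (iii) the temporal integrability survives the H\"older step. The specific restriction $-2/(7p) < s < 0$, with its somewhat mysterious factor $7$, presumably encodes the tightest version of these three compatibilities arising at $l = 3$, where three factors of $\bes{p}{r}{\bar s}$ accumulate regularity losses through successive products and embeddings.
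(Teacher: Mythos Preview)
Your approach is essentially the same as the paper's. The paper packages your ``homogeneous part'' argument as Corollary~\ref{IC_cor} and your ``smoothing $+$ H\"older'' forcing estimate as Lemma~\ref{lemma_imp} (both borrowed from \cite{da2003strong}), then combines them with Lemma~\ref{cor10} exactly as you propose. Two small corrections are worth noting. First, after the pointwise product bound the function $g_l = y^l\,\wick{z_\eps^{3-l}}$ lies only in $L^{p/(l+1)}$ in time (by H\"older on the $l+1$ factors, each in $L^p$), not in $L^p$; the dual exponent on the kernel is therefore $p/(p-l-1)$, and the resulting integrability condition on the singular power is precisely $(n-1)s + 1 - n/p > 0$ with $n=l+1$, which is \eqref{e44a}. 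Second, Lemma~\ref{cor10} carries the restriction $l<n$, so the case $l=3$ cannot be handled by a direct four-factor product estimate; the paper treats it separately by writing $y^3 = y\cdot y^2$, placing one factor of $y$ in $L^p([0,T];\besn)$ via the embedding $\bes{p}{r}{\bar s}\hookrightarrow\besn$, and then applying Lemma~\ref{cor10} with $l=2$. With these adjustments your sketch becomes the paper's proof.
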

\begin{proof}
The bound of the first term on the right-hand side of (\ref{e58a}) is given in Proposition~\ref{IC_cor}. Next, we split the second term into two parts, $\Omega_\eps^1+\Omega_\eps^2$, where
\begin{equs}
\Omega_\eps^1(t,y) &= \int_0^t e^{(t-\tau)\, A_\eps} \, \sum_{l=0}^{2}\, a_l \, \wick{z_\eps^{3-l}(\tau)} \,y^l(\tau) \,d\tau,\\
\Omega_\eps^2(t,y) &=\int_0^t e^{(t-\tau)\, A_\eps} \,  y^3(\tau) \, d\tau\;.
\end{equs}
We bound $\Omega_\eps^1$ first. Since $((l+1)-1)s +1 -2/p>0$ for $l=0,1,2$, we can employ Lemma~\ref{lemma_imp} to find that there exist $\delta > 0$ and $K_\eps$ as in the statement such that 
\begin{equ}
\norm{\int_0^t \, e^{(t-\tau)A_\eps} \wick{z_\eps^{3-l}(\tau)} \, y^l(\tau) d\tau}_{\ET} \leq K_\eps\, T^{\delta} \bigl\|{\wick{z_\eps^{3-l}}\, y^l}\bigr\|_{L^{p/(l+1)}([0,T]; \bes{p}{r}{(2l+1)s})}. 
\end{equ}
Using Lemma~\ref{cor10} and adding up the respective contributions yields the terms with $l = 0,1,2$ on the right-hand side of  (\ref{e59f}). 

We now bound  $\Omega_\eps^2$. Since $y\in\ET $ and $s<\bar s$, the embedding $\bes{p}{r}{\bar s} \hookrightarrow \besn$ implies that $y\in L^p([0,T];\besn)$. From  Lemma~\ref{lemma_imp} with  $n=3$ and  Lemma~\ref{cor10} with $l=2$, it follows again that there exist $\delta$ and  $K_\eps$ such that 
\begin{equs}\label{e3e}
\norm{\int_0^t e^{(t-\tau) A_\eps} y^3(\tau) d\tau}_{\ET} &\leq K_\eps \, T^{\delta} \, \norm{y\, y^2}_{L^{p/3}([0,T]; \bes{p}{r}{5s})} \\ &\leq K_\eps T^{\delta}\, \norm{y}^3_{L^p([0,T]; \bes{p}{r}{\bar s})}, 
\end{equs} 
which is the term with $l=3$ on the right-hand side of (\ref{e59f}).
\end{proof}

\begin{lemma}\label{huhu}
Let $\eps>0$, assume \eref{e:assExp} and consider $($\ref{e26}$)$ with $u^0\in \besn$. Then for all  $T>0$, there exists $\P$-a.s.\ a unique mild solution $v_\eps \in \ET$. 
\end{lemma}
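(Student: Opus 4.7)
The plan is to construct $v_\eps$ as the unique fixed point of the mapping $\CM^\eps$ from \eqref{e58a}, with coefficients $(a_0,a_1,a_2,a_3) = (-1,-3,-3,-1)$ so that fixed points in $\ET$ are precisely the mild solutions of~\eqref{e26}. Corollary~\ref{aha2} and Proposition~\ref{prop1} guarantee that $\wick{z_\eps^n} \in L^p([0,T];\besn)$ for $n=1,2,3$ and $z_\eps(0) \in \besn$ hold with $\P$-probability one, so I would fix such a realization and proceed deterministically.

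For local existence I would apply Banach's contraction principle on a closed ball $B_R \subset \mathcal{E}_{T_0}$. Lemma~\ref{e57} already bounds $\norm{\CM^\eps y}_{\ET}$ by an affine term plus a polynomial of degree three in $\norm{y}_{\ET}$, all nonlinear coefficients carrying a factor $T^\delta$. Choosing $R := 2(1 + K_\eps T_0^\delta)\,\norm{u^0 - z_\eps(0)}_{\besn}$ and $T_0$ small enough (in terms of the pathwise Besov norms of the $\wick{z_\eps^n}$), the ball $B_R$ is mapped into itself. The matching Lipschitz estimate is obtained by the same method via the algebraic identity $y_1^l - y_2^l = (y_1 - y_2)\sum_{k=0}^{l-1} y_1^{k}\,y_2^{l-1-k}$ combined with the multiplicative Besov estimates of Lemma~\ref{cor10}, so after possibly shrinking $T_0$ further $\CM^\eps$ becomes a strict contraction on $B_R$ and yields a unique $v_\eps \in \mathcal{E}_{T_0}$.

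To extend this local solution to $[0,T]$ I would use the standard blow-up dichotomy: either $v_\eps \in \ET$, or there is a maximal $T^\star \in (0,T]$ with $\norm{v_\eps(t)}_{\besn} \to \infty$ as $t \uparrow T^\star$. The length of each local step, when restarted at time $t_0$, is bounded below by a decreasing function of $\norm{v_\eps(t_0)}_{\besn}$ and of the remaining $L^p$-norms of $\wick{z_\eps^n}$, so excluding the second alternative closes the argument. For fixed $\eps > 0$ the noise $W_\eps$ lives in a finite-dimensional subspace and $z_\eps$ is spatially smooth, so a classical dissipative $L^q$-energy estimate on the full equation~\eqref{e103} (apply It\^o's formula to $\norm{u_\eps}_{L^q}^q$ for $q$ even and absorb the lower-order terms into the negative contribution $-\int u_\eps^{q+2}\,dx$ via Young's inequality) produces uniform-in-time bounds on $u_\eps$ in $L^q$, hence on $v_\eps = u_\eps - z_\eps$ in $\besn$ through the embedding $L^p \hookrightarrow \besn$ valid for $s<0$.

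I expect the main obstacle to be precisely this transition from the locally constructed mild solution in $\ET$ to an a priori global bound: the negative regularity $s<0$ forbids direct energy methods in the Besov framework, so one is forced either (as sketched) to exploit the smoothness of the truncated noise $W_\eps$ for fixed $\eps$ and run the energy estimates in a parallel higher-regularity setting, or to prove pathwise an a priori bound for $\CM^\eps$-fixed points by carefully playing the cubic damping against the forcing. Uniqueness of mild solutions in $\ET$ then ensures that the two constructions agree on their common interval of existence.
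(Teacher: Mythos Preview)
Your local existence argument via contraction of $\CM^\eps$ on a ball in $\mathcal{E}_{T_0}$ is exactly what the paper does (it simply cites \cite[Prop.~4.4]{da2003strong} for the details). The divergence is in how you pass from local to global.

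The paper does not run an $L^q$ energy estimate. Instead it observes that the local solution in $\ET$ immediately gains spatial regularity: a second fixed-point argument, this time in a space with a \emph{weighted} supremum norm allowing a blow-up at $t=0$, shows that $v_\eps(T^\ast) \in \CC(\T^2)$ for any $T^\ast$ in the local existence interval. From there, since for fixed $\eps$ the forcing terms $\wick{z_\eps^n}$ are smooth, the equation \eqref{e26} with $\CC(\T^2)$ initial datum falls under classical semilinear parabolic theory (the paper cites \cite[Thm.~6.4, Prop.~6.23]{hairer2009introduction}), which gives a global solution in $\CC([T^\ast,T];\CC(\T^2)) \cap \CC((T^\ast,T];\CC^\infty)$. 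Since this space embeds into $\ET$, global existence follows, and uniqueness in $\ET$ glues the two pieces.

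Your energy-estimate route is in principle viable, but as you yourself flag, it has a genuine regularity gap at $t=0$: applying It\^o's formula to $\norm{u_\eps}_{L^q}^q$ requires $u^0 \in L^q$, which is not implied by $u^0 \in \besn$ for $s<0$. To close your argument you would still need a smoothing step showing $v_\eps(t_0) \in L^q$ (or better) for some small $t_0>0$ --- and that step is precisely the weighted-norm bootstrap the paper performs. Once you have paid that price, citing the classical global result is shorter than redoing the $L^q$ estimate, which is why the paper takes that route. Your approach would work after inserting the bootstrap, but it duplicates effort rather than saving it.
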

\begin{proof}
The existence of unique local solutions to (\ref{e26}) follows from (\ref{e59f}) and is shown in detail in \cite[Prop. 4.4]{da2003strong}. Furthermore, a fixed point argument in a weighted supremum norm shows that
$v_\eps(T^\ast)\in \CC(\T^2)$. Since, for $\CC(\T^2)$-valued initial datum, (\ref{e26}) admits a unique global solution in $\CT \cap \CC((0,\infty),\CC^\infty(\T^2))$, see e.g.\ \cite[Thm. 6.4; Prop. 6.23]{hairer2009introduction}, the claim follows from the fact that this space is a subspace of $\ET$. 
 \end{proof} 
 
Before we state the main result of this section, we introduce the Banach space
\begin{equ}
\ET^\delta: = \CC([\delta ,T]; \besn)\cap L^p([0,T]; \bes{p}{r}{\bar s}), \qquad \delta\in [0,T),
\end{equ}
equipped with the norm $\norm{x}_{\ET^\delta}:=\norm{x}_{\CC([\delta,T]; \bes{p}{r}{s})}+ \norm{x}_{L^p([0,T];\bes{p}{r}{\bar s})}$. With this notation, we have:

\begin{proposition}\label{e58}
Assume \eref{e:assExp} and consider the sequence of regularized problems $($\ref{e26}$)$ with fixed initial condition $u^0\in \besn$. For all $T>0$, the unique global solution $v_\eps \in \ET$ from Lemma~\ref{huhu} converges to zero in  sense that,
for every $\delta\in (0,T)$ ,
$\lim_{\eps \to 0}\norm{v_\eps}_{\ET^\delta}= 0$ in probability.
\end{proposition}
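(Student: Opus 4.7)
The strategy is to split $v_\eps = v_\eps^{\mathrm{lin}} + v_\eps^{\mathrm{nl}}$ into the linear evolution of the initial datum and a nonlinear remainder, and to show that each piece vanishes in probability in $\ET^\delta$. The main obstacle is that the initial condition $v_\eps(0) = u^0 - z_\eps(0)$ does \emph{not} go to zero (since $u^0$ is fixed while only $z_\eps(0) \to 0$, by Lemma~\ref{lemma2}); smallness must therefore come from the two asymptotic mechanisms available, namely the vanishing of the Wick powers $\wick{z_\eps^n}$ ($n \ge 1$) from Corollary~\ref{aha2}, and the operator-norm damping $\|e^{tA_\eps}\|_{\besn \to \besn} \le e^{-(C_\eps-1)t}$ produced by $C_\eps \to +\infty$. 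In addition, the cubic self-interaction creates a feedback loop that must first be tamed by an a priori bound before any smallness can propagate.

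\textbf{Step 1 (a priori boundedness).} Apply Lemma~\ref{e57} to $y = v_\eps$, using that $v_\eps = \CM^\eps v_\eps$. Setting $M_\eps := \norm{u^0 - z_\eps(0)}_{\besn}$ and $Z_\eps^{(l)} := \bigl\|\wick{z_\eps^{3-l}}\bigr\|_{L^p([0,T];\besn)}$ (with the convention $\wick{z_\eps^0} := 1$), one obtains
\[
\norm{v_\eps}_{\ET} \le (1 + K_\eps T^\delta)\, M_\eps + K_\eps T^\delta \sum_{l=0}^{3} Z_\eps^{(l)}\, \norm{v_\eps}_{\ET}^l .
\]
Since $K_\eps \to 0$, each coefficient $K_\eps T^\delta Z_\eps^{(l)}$ tends to zero in probability ($Z_\eps^{(l)}$ itself vanishes for $l \le 2$ by Corollary~\ref{aha2}), while $M_\eps \to \norm{u^0}_{\besn}$ in probability. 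A standard bootstrap in $T$, based on the continuity of $T \mapsto \norm{v_\eps}_{\cc E_T}$ provided by Lemma~\ref{huhu}, then yields $\norm{v_\eps}_{\ET} \le 2\norm{u^0}_{\besn}$ on an event of probability tending to $1$.

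\textbf{Step 2 (vanishing of $v_\eps^{\mathrm{lin}}$).} Set $v_\eps^{\mathrm{lin}}(t) := e^{tA_\eps}(u^0 - z_\eps(0))$. The semigroup bound yields
\[
\sup_{t \ge \delta} \norm{v_\eps^{\mathrm{lin}}(t)}_{\besn} \le e^{-(C_\eps-1)\delta}\, M_\eps \to 0
\]
in probability. For the $L^p([0,T]; \bes{p}{r}{\bar s})$-component, combine the heat-kernel smoothing $\norm{e^{t\Delta}\phi}_{\bes{p}{r}{\bar s}} \lesssim t^{-(s+2/p)/2} \norm{\phi}_{\besn}$ with the damping factor; a rescaling $u = p(C_\eps-1)t$ in $\int_0^T t^{-(ps/2+1)} e^{-p(C_\eps-1)t}\,dt$ shows that $\norm{v_\eps^{\mathrm{lin}}}_{L^p([0,T];\bes{p}{r}{\bar s})} \lesssim M_\eps\, C_\eps^{s/2}$, which vanishes because $s<0$. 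Hence $\norm{v_\eps^{\mathrm{lin}}}_{\ET^\delta} \to 0$ in probability.

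\textbf{Step 3 (vanishing of $v_\eps^{\mathrm{nl}}$).} The remainder $v_\eps^{\mathrm{nl}} := v_\eps - v_\eps^{\mathrm{lin}}$ is exactly the integral part of $\CM^\eps v_\eps$, so only the second line of Lemma~\ref{e57} contributes and
\[
\norm{v_\eps^{\mathrm{nl}}}_{\ET} \le K_\eps T^\delta \sum_{l=0}^{3} Z_\eps^{(l)}\, \norm{v_\eps}_{\ET}^l .
\]
Inserting the uniform bound from Step 1 and invoking the vanishing of $K_\eps T^\delta Z_\eps^{(l)}$ for each $l$, the right-hand side tends to zero in probability. Together with Step 2, this gives $\norm{v_\eps}_{\ET^\delta} \le \norm{v_\eps^{\mathrm{lin}}}_{\ET^\delta} + \norm{v_\eps^{\mathrm{nl}}}_{\ET} \to 0$ in probability, as required. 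The most delicate ingredient is the $L^p$-in-time bound of Step 2: the damping from $C_\eps \to \infty$ must overcome the heat-kernel time singularity $t^{-(s+2/p)/2}$, and it is precisely the constraint $s<0$ in \eref{e:assExp} that makes this competition favourable.
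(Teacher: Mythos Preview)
Your proof is correct and uses the same ingredients as the paper: the mapping bound of Lemma~\ref{e57}, the damping $e^{-(C_\eps-1)t}$ coming from $C_\eps\to\infty$, and the vanishing of the Wick powers from Corollary~\ref{aha2}. The organisation differs slightly. The paper introduces the stopping time $\tau_\eps^\delta = T \wedge \inf\{t\ge\delta:\|v_\eps\|_{\mathcal E_t^\delta}\ge 1\}$ and applies Lemma~\ref{e57} directly in the $\mathcal E_{\tau_\eps^\delta}^\delta$-norm; because the initial-data contribution is measured in $\CC([\delta,\tau_\eps^\delta];\besn)$, the bound \eqref{e58bb} together with Corollary~\ref{IC_cor} puts a vanishing prefactor $K_\eps$ in front of \emph{every} term, so the whole right-hand side goes to zero in one stroke, and the stopping time is then shown to equal $T$ with high probability. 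Your route instead first extracts an a priori bound $\|v_\eps\|_{\ET}\le 2\|u^0\|_{\besn}$ via a continuity argument (your ``bootstrap in $T$'', which is really the same stopping-time mechanism phrased differently), and then splits $v_\eps=v_\eps^{\mathrm{lin}}+v_\eps^{\mathrm{nl}}$ to exhibit separately the two damping mechanisms. Your Step~2 computation is precisely the content of Corollary~\ref{IC_cor}, and your Step~3 is the integral half of Lemma~\ref{e57}. The paper's version is shorter; yours makes the two sources of smallness ($C_\eps\to\infty$ for the linear flow, $K_\eps\to 0$ for the Duhamel term) more transparent.
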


\begin{proof}
We introduce the stopping time $\tau_\eps^\delta$ as
\begin{equ}\label{e59}
\tau_\eps^\delta :=T \wedge \inf \set{t\geq \delta : \norm{v_\eps}_{\mathcal{E}_t^\delta}  \geq 1},
\end{equ}
with the convention that $\tau_\eps^\delta = T$ if the set is empty.
Next, we establish the limit
\begin{align}\label{e60}
\lim_{\eps \to 0} \E \norm{v_\eps}_{\ETw^\delta} =0\;.
\end{align}
Recalling that $v_\eps$ solves the fixed point equation $\CM^\eps \, v_\eps = v_\eps$,
we can use Lemma~\ref{e57}, combined with 
\begin{align}\label{e58bb}
\sup_{t\in [\delta, T]} \norm{e^{tA_\eps}\ob{u^0-z_\eps(0)}}_{\besn} \leq   e^{-\delta\thinspace C_\eps} \norm{\ob{u^0-z_\eps(0)}}_{\besn},
\end{align}
 to show that there exists $\gamma >0$ and $K_\eps$ with $\lim_{\eps \to 0}K_\eps = 0$ such that 
\begin{align*}
\norm{v_\eps}_{\ETw^\delta}& \leq K_\eps \ob{1+ T^{\gamma}} \norm{u^0-z_\eps(0)}_{\besn}\\
&\qquad  + K_\eps  T^{\gamma} \sum_{l=0}^3 \, \norm{v_\eps}^l_{\ETw^\delta}\bigl\|\wick{z_\eps^{3-l}}\bigr\|_{L^p([0, \tau^\delta_\eps]; \besn)}. 
\end{align*}
Since $\norm{v_\eps}_{\ETw^\delta} \le 1$ by construction,
the claim \eref{e60} then follows from Lemma~\ref{lemma2} and Corollary~\ref{aha2}. 
Since, by the definition of $\tau_\eps^\delta$, this implies that $\lim_{\eps \to 0} \P(\tau_\eps^\delta <T) = 0$,
the claim follows.
\end{proof}

\begin{proof}[Proof of Theorem \ref{main_thm1}]  
Since $u_\eps = z_\eps + v_\eps$, the claim follows from Propositions~\ref{prop1} and \ref{e58}, in conjunction with the embedding
$\bes{\bar p}{r}{\bar s} \hookrightarrow \bes{p}{r}{s}$, which holds if $\bar s \ge s$ and $\bar p \ge p$.
\end{proof}

\section{Deterministic limit for weak noise}
\label{sec:det}

In this section, we give the proof of Theorem~\ref{main_thm2}. The technique of proof is almost identical to
the previous section, but we define objects in a slightly different way. This time, we define an operator $A = \Delta - 1$,
and we set 
\begin{align}\label{newsc2}
z_\eps(t):=\sigma(\eps)\int_{-\infty}^t e^{(t-s)A} dW_\eps(s)\;.
\end{align}
We furthermore define all of our Wick products with respect to the law $\mu_\eps$ of $z_\eps$, so that
all throughout this section \eref{wickpower} holds, but with $D_\eps$ given by
\begin{equ}
D_\eps^2 = \frac{1}{8\pi^2} {\sum_{\abs{k}\leq 1/\eps} \frac{\sigma^2(\eps)}{1 +\abs{k}^2}}\;.
\end{equ}
Note that, by Lemma~\ref{lem:intLog}, one has
\begin{equ}
\lim_{\eps\to0} D_\eps^2 = {\lambda^2 \over 8\pi}\;.
\end{equ}
As before, we rewrite the solution to (\ref{e103}) as $u_\eps=v_\eps + z_\eps$, where $v_\eps$ is solution to 
\begin{equ}\label{nocheinmal}
\partial_t v_\eps = A v_\eps + (2-3D_\eps^2)\bigl(v_\eps + z_\eps\bigr) + \sum_{l=0}^3 a_l v_\eps^l\, \wick{z_\eps^{3-l}} \;,
\end{equ}
with initial condition $v_\eps(0) = u^0 - z_\eps(0)$ and suitable constants $a_l$.

Note first that one has the following result:
\begin{proposition}
Let $z_\eps$ be defined as in \eref{newsc2}. Then, for every $T>0$ and every $n>0$, the limits
\begin{equ}
\lim_{\eps\to0}\norm{z_\eps}_{\CC([0,T];\besn)} = 0\;,\quad
\lim_{\eps\to0}\norm{\wick{z_\eps^n}}_{L^p([0,T];\besn)} = 0\;,
\end{equ}
hold in probability.
\end{proposition}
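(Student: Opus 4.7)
The hypothesis $\sigma^2(\eps)\log(1/\eps)\to\lambda^2\in\R$ combined with boundedness of $\sigma$ forces $\sigma(\eps)\to 0$, and this vanishing noise amplitude is the sole mechanism driving both limits. In contrast to Section~\ref{proof1}, the Wick constant $D_\eps^2$ no longer tends to infinity but converges to the finite value $\lambda^2/(8\pi)$, so the renormalisation remains \emph{active} in the limit. The plan is to re-run the two key estimates of Section~\ref{proof1}, namely Lemma~\ref{lemma2} for the Wick powers and Proposition~\ref{prop1} for the stochastic convolution itself, with $A_\eps$ replaced by the $\eps$-independent operator $A=\Delta-1$ and using $\sigma(\eps)$ (rather than $C_\eps^{-1}$) as the small factor extracted from every Gaussian computation.

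For the Wick powers, the proof of Lemma~\ref{lemma2} carries over verbatim and yields
\begin{equ}
\E\norm{\wick{z_\eps^n}}_{\besn}^k \lesssim \norm{\gamma_\eps}_{H^{\beta_n}}^{kn/2}\;,\qquad \norm{\gamma_\eps}_{H^{\beta_n}}^2 = \sigma^4(\eps)\sum_{|k|\le 1/\eps}\frac{1}{(1+\abs{k}^2)^{2-\beta_n}}\;,
\end{equ}
with $\beta_n = 1 + rks/(2^n p) < 1$ (since $s<0$). The condition $\beta_n<1$ ensures $2-\beta_n>1$, so the sum is bounded uniformly in $\eps$, giving $\E\norm{\wick{z_\eps^n}}_{\besn}^k \lesssim \sigma(\eps)^{kn}\to 0$. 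Stationarity of $z_\eps$ combined with Fubini and Markov's inequality then delivers the $L^p([0,T];\besn)$ statement in probability, exactly as in Corollary~\ref{aha2}.

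For the supremum bound on $z_\eps$ I would follow the factorisation proof of Proposition~\ref{prop1} line by line. Decomposing $z_\eps(t)=e^{tA}z_\eps(0)+\bar z_\eps(t)$ with $\bar z_\eps = \Gamma Y_\eps$, It\^o's isometry gives
\begin{equ}
\E\abs{\ob{Y_\eps(t),e_k}}^2 \lesssim \sigma^2(\eps)\,(1+\abs{k}^2)^{2\alpha-1}\;,
\end{equ}
and the Littlewood--Paley computation of (\ref{e27p4})--(\ref{starbound}) then produces $\E\norm{\Delta_q Y_\eps(t)}_{L^p}^p \lesssim \sigma(\eps)^p\,2^{2qp\alpha}$. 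Choosing $\alpha$ so small that $2\alpha<|s|$ and summing the Besov series yields $\sup_{t\le T}\E\norm{Y_\eps(t)}_{\besn}\lesssim \sigma(\eps)$, and boundedness of $\Gamma\colon L^q([0,T];\besn)\to\CC([0,T];\besn)$ then gives $\E\norm{\bar z_\eps}_{\CC([0,T];\besn)}\lesssim \sigma(\eps)\to 0$. The remaining initial-datum term $e^{tA}z_\eps(0)$ is controlled by the $n=1$ specialisation of the previous paragraph together with Lemma~\ref{a14}, and Markov's inequality upgrades convergence in $L^1$ to convergence in probability.

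I do not expect any serious obstacle; the only conceptual point to keep in mind is that the small parameter driving vanishing has shifted from the large mass $C_\eps$ of Section~\ref{proof1} to the small noise amplitude $\sigma(\eps)$, but the same $\sigma^2(\eps)/(1+\abs{k}^2)$ structure of the Fourier variance of $z_\eps$ remains in force and controls every estimate in the same way.
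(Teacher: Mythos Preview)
Your proposal is correct and follows essentially the same route as the paper: the paper's own proof simply invokes \cite[Lem.~3.2]{da2003strong} to obtain $\E\norm{\wick{z_\eps^n}}_{\besn}\lesssim \sigma^n(\eps)\to 0$ and then states that the $\CC([0,T];\besn)$ bound on $z_\eps$ is ``virtually identical'' to Proposition~\ref{prop1}. You have spelled out both of these steps in detail, correctly identifying that the vanishing now comes from the prefactor $\sigma(\eps)$ rather than from a diverging mass $C_\eps$, and your choices of exponents ($\beta_n<1$ for the Wick powers, $2\alpha<|s|$ in the factorisation argument) are the right ones.
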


\begin{proof}
It follows from \cite[Lem.~3.2]{da2003strong} that 
\begin{equ}
\E \norm{\wick{z_\eps^n}}_{\besn} \lesssim \sigma^n(\eps) \to 0\;,
\end{equ}
as $\eps \to 0$. The proof that $z_\eps$ also converges to $0$ in $\CC([0,T];\besn)$ is virtually identical to the
proof of Proposition~\ref{prop1}, so we omit it.
\end{proof}

It remains to establish that
$\lim_{\eps\to0}\norm{v_\eps - w_\lambda}_{\CC([0,T];\besn)}= 0$ in probability, which is the content of the following result:

\begin{proposition}
Assume \eref{e:assExp} and let $u_0\in \besn$. Let $v_\eps\in \ET$ be the unique mild solution to  (\ref{nocheinmal}), and $w_\lambda\in \ET$  the unique solution to $($\ref{det_alt}$)$. Then 
\begin{equ}
\lim_{\eps\to0} \norm{v_\eps - w_\lambda}_{\ET} = 0
\end{equ}
in probability.
\end{proposition}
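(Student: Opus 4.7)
Set $\rho_\eps := v_\eps - w_\lambda$. Rewriting (\ref{det_alt}) as $\partial_t w_\lambda = Aw_\lambda + (2-\tfrac{3\lambda^2}{8\pi}) w_\lambda - w_\lambda^3$ (with $A = \Delta - 1$), subtracting from (\ref{nocheinmal}), and using the factorization $v_\eps^3 - w_\lambda^3 = \rho_\eps(v_\eps^2 + v_\eps w_\lambda + w_\lambda^2)$, one arrives at
\[
\partial_t \rho_\eps = A\rho_\eps + (2-3D_\eps^2)\rho_\eps - \rho_\eps N_\eps + F_\eps\;, \qquad \rho_\eps(0) = -z_\eps(0)\;,
\]
with $N_\eps := v_\eps^2 + v_\eps w_\lambda + w_\lambda^2$ and
\[
F_\eps := \bigl(\tfrac{3\lambda^2}{8\pi} - 3D_\eps^2\bigr) w_\lambda + (2-3D_\eps^2)z_\eps - 3v_\eps^2 z_\eps - 3 v_\eps\wick{z_\eps^2} - \wick{z_\eps^3}\;.
\]
By Lemma \ref{lem:intLog} one has $3D_\eps^2 \to \tfrac{3\lambda^2}{8\pi}$, and by the previous proposition $z_\eps$, $\wick{z_\eps^2}$ and $\wick{z_\eps^3}$ all vanish in probability in $\CC([0,T];\besn)$ or $L^p([0,T];\besn)$. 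Thus once $v_\eps$ is controlled a priori in $\ET$, both $\rho_\eps(0)$ and every term contributing to $F_\eps$ become $o_\P(1)$ in the relevant norms.

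For the a priori control I would mimic Proposition \ref{e58}. Classical parabolic theory applied to the semilinear equation (\ref{det_alt}) yields $w_\lambda \in \ET$ with $\|w_\lambda\|_{\ET} \le M$ for some $M = M(T,\|u^0\|_{\besn},\lambda)$ that is deterministic and independent of $\eps$. I then introduce the stopping time
\[
\tau_\eps := T \wedge \inf\set{t\ge 0 : \|\rho_\eps\|_{\CC([0,t];\besn)\cap L^p([0,t];\bes{p}{r}{\bar s})} \ge 1}\;,
\]
which guarantees $\|v_\eps\|_{\mathcal{E}_{\tau_\eps}} \le M+1$ on $[0,\tau_\eps]$ and, via Lemma \ref{cor10}, a deterministic bound on $N_\eps$ depending only on $M$.

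Writing $\rho_\eps$ in mild form and applying Lemma \ref{lemma_imp} together with the multiplicative inequality Lemma \ref{cor10} termwise on a subinterval $[t_1,t_2] \subset [0,\tau_\eps]$ then yields an estimate of the schematic form
\[
\|\rho_\eps\|_{\mathcal{E}_{[t_1,t_2]}} \le C\|\rho_\eps(t_1)\|_{\besn} + C(t_2-t_1)^\gamma \bigl(\Phi(M)\|\rho_\eps\|_{\mathcal{E}_{[t_1,t_2]}} + G_\eps\bigr)\;,
\]
where $\mathcal{E}_{[t_1,t_2]}$ denotes the obvious analogue of $\ET$ over $[t_1,t_2]$, $\gamma > 0$ is produced by Lemma \ref{lemma_imp}, $\Phi$ is a polynomial, and $G_\eps$ is a random quantity depending only on $F_\eps$ and $v_\eps$ which tends to $0$ in probability. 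Choose $T_0 = T_0(M)$ so small that $CT_0^\gamma\Phi(M) \le 1/2$, absorb the $\rho_\eps$ term on the right, and iterate the resulting bound on the partition $0 < T_0 < 2T_0 < \cdots$ of $[0,T]$; at each step the initial datum $\rho_\eps(kT_0)$ is controlled by the output of the previous step. After $\lceil T/T_0\rceil$ iterations we obtain $\|\rho_\eps\|_{\mathcal{E}_{\tau_\eps}}\to 0$ in probability, which forces $\P(\tau_\eps < T) \to 0$ and completes the proof.

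The main obstacle I anticipate is that, in sharp contrast with the proof of Proposition \ref{e58}, the linear coefficient $(2-3D_\eps^2)$ is bounded but \emph{does not vanish}, and the same is true for $N_\eps$: the large damping $-C_\eps$ that generated the vanishing factor $K_\eps$ in Lemma \ref{e57} is simply absent here, so the analogue of that lemma only yields $CT^\gamma\Phi(M)$. Consequently the Banach fixed-point step cannot close in a single pass on $[0,T]$, and the finite iteration over $T_0$-sized subintervals is the natural workaround; this in turn forces us to first secure the \emph{global} deterministic bound $\|w_\lambda\|_{\ET}\le M$ via classical theory for (\ref{det_alt}) before launching the perturbation argument.
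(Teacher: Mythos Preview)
Your proposal is correct and follows essentially the same route as the paper: form $\rho_\eps = v_\eps - w_\lambda$, write its mild formulation, use the deterministic bound $\|w_\lambda\|_{\ET}\le K$, control $\rho_\eps$ via a stopping time together with Lemmas~\ref{lemma_imp} and~\ref{cor10}, and then iterate over short subintervals. The only cosmetic difference is that the paper absorbs the constant linear coefficient into a modified generator $\hat A = \Delta + a_\lambda$ rather than keeping $A=\Delta-1$ and carrying $(2-3D_\eps^2)\rho_\eps$ as a separate term, and it dispatches the iteration you spell out in detail with the single sentence ``This bound can easily be iterated.''
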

\begin{proof}
Setting $\delta_\eps = 3D_\eps^2 - {3\lambda^2 \over 8\pi}$ and $a_\lambda = 1 - {3\lambda^2 \over 8\pi}$, we can rewrite the equations for $v_\eps$ and $w_\lambda$ as
\begin{equs}
\d_t v_\eps &= \Delta v_\eps + a_\lambda v_\eps - v_\eps^3 - \delta_\eps v_\eps + (2-3D_\eps^2) z_\eps + \sum_{l=0}^2 a_l v_\eps^l \,\wick{z_\eps^{3-l}}\;,\\
\d_t w_\lambda &= \Delta w_\lambda + a_\lambda  w_\lambda - w_\lambda^3\;.
\end{equs}
Setting $\rho_\eps = v_\eps - w_\lambda$, we see that $\rho_\eps$ solves the following evolution equation:
\begin{equs}
\d_t \rho_\eps &= \bigl(\Delta + a_\lambda\bigr) \rho_\eps - \rho_\eps \bigl(v_\eps^2 + v_\eps w_\lambda + w_\lambda^2\bigr)  \\
&\quad + (2-3D_\eps^2) z_\eps -\delta_\eps v_\eps + \sum_{l=0}^2 a_l v_\eps^l \,\wick{z_\eps^{3-l}}\;.
\end{equs}
Setting $\hat A = \Delta + a_\lambda$, we have the mild formulation
\begin{equs}
\rho_\eps(t) &= e^{\hat At} \rho_\eps(0) - \int_0^t e^{\hat A(t-s)}\rho_\eps(s) \bigl(v_\eps^2 + v_\eps w_\lambda + w_\lambda^2\bigr)(s)\,ds \\
&\quad + (2-3D_\eps^2)\int_0^t e^{\hat A(t-s)}z_\eps(s)\,ds -\delta_\eps \int_0^t e^{\hat A(t-s)}v_\eps(s)\,ds \\
&\quad +   \sum_{l=0}^2 a_l \int_0^t e^{\hat A(t-s)}v_\eps^l(s) \,\wick{z_\eps^{3-l}}(s)\,ds\;.
\end{equs}
It then follows from Lemmas~\ref{lemma_imp} and \ref{cor10} that
\begin{equs}
\|\rho_\eps\|_{\ET} &\lesssim  \| \rho_\eps(0) \|_{\bes{p}{r}{s}} + T^\delta \|\rho_\eps\|_{\ET} \bigl(\|v_\eps\|_{L^p([0,T]; \bes{p}{r}{\bar s})}^2 + \|w_\lambda\|_{L^p([0,T]; \bes{p}{r}{\bar s})}^2\bigr) \\
&\quad +\delta_\eps \|v_\eps\|_{\ET} + T^\delta \sum_{l=0}^2 (1+\|v_\eps\|_{\ET}^l) \,\|\wick{z_\eps^{3-l}}\|_{L^p([0,T]; \besn)}\;.\label{e:boundrho}
\end{equs}
We now use the fact that there exists $K$ 
such that the deterministic solution $w_\lambda$ satisfies $\|w_\lambda\|_{\ET} \le K$.
Setting $\tau_\eps = \bar T \wedge \inf \{t : \norm{\rho_\eps}_{\mathcal{E}_t}  \geq 1\}$ for some $\bar T \le T$ such that
$\bar T^\delta ((K+1)^2 + K^2) \le {1\over 2}$, it follows from \eref{e:boundrho} that
\begin{equ}
\|\rho_\eps\|_{\ETw} \lesssim  \| \rho_\eps(0) \|_{\bes{p}{r}{s}} + \bar T^\delta \sum_{l=0}^2 (1+K)^l \,\bigl(\delta_\eps + \|\wick{z_\eps^{3-l}}\|_{L^p([0,T]; \besn)}\bigr)\;.
\end{equ}
This bound can easily be iterated, and the
claim then follows similarly to the proof of Proposition~\ref{e58}.
\end{proof}

\appendix

\section{Technical results}

In this appendix, we collect a few technical results. 
\begin{lemma}\label{lemma_imp}
Let $A = \Delta - \Lambda$ for $\Lambda \ge 1$ and let $f\in L^{p/n}([0,T]; \bes{p}{r}{(2n-1)s})$ with $p> n\geq1$, $s<0$ and  $\bar s=2/p+2s$ such that \begin{align}\label{e44a}
(n-1)s+1-\frac np>0.
\end{align}
Then  there exists $\delta > 0$ such that
\begin{equ}
\norm{\int_0^t e^{(t-\tau)A }f(\tau)d\tau }_{\mathcal{E}_T} \leq K(\Lambda) \,T^\delta  \,\norm{f}_{L^{p/n}([0,T]; \bes{p}{r}{(2n-1)s})},
\end{equ}
with a constant $K(\Lambda)$ such that $\lim_{\Lambda \to \infty} \, K(\Lambda) =0.$
\end{lemma}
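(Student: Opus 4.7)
The plan is to bound the two components of $\|\cdot\|_{\ET}$ separately using standard heat-semigroup smoothing in Besov spaces. The starting point is the estimate
\begin{equ}
\|e^{tA} g\|_{\bes{p}{r}{s'}} \lesssim e^{-\Lambda t}\, t^{-(s'-s'')/2}\, \|g\|_{\bes{p}{r}{s''}}\;,\qquad s'' \le s'\;,
\end{equ}
valid for $A = \Delta - \Lambda$ with an implicit constant independent of $\Lambda\ge 1$. The exponential factor is what will ultimately drive $K(\Lambda)\to 0$.

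For the $\CC([0,T];\besn)$ component I would apply this with $s'=s$ and $s''=(2n-1)s$, producing the smoothing exponent $-(s-(2n-1)s)/2 = (n-1)s$. A pointwise-in-$t$ estimate followed by H\"older's inequality in time with exponents $p/n$ (matching the $L^{p/n}$ norm on $f$) and $q = p/(p-n)$ reduces the matter to controlling
\begin{equ}
\Bigl(\int_0^T r^{(n-1)sq} e^{-\Lambda r q} dr\Bigr)^{1/q}\;.
\end{equ}
For the $L^p([0,T];\bes{p}{r}{\bar s})$ component, since $\bar s - (2n-1)s = -2(n-1)s + 2/p$, the same smoothing estimate produces the exponent $(n-1)s - 1/p$, and Young's convolution inequality in time with exponents $p/n$ and $p/(p-n+1)$ (which sum to $1+1/p$) reduces matters to $\|r \mapsto r^{(n-1)s - 1/p} e^{-\Lambda r}\|_{L^{p/(p-n+1)}([0,T])}$. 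In both cases, the integrability at $r=0$ is equivalent to condition \eref{e44a}: in the H\"older case it reads $(n-1)sq>-1$, which rearranges to $(n-1)s + 1 - n/p>0$, and the Young case gives the same inequality.

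The main obstacle is producing the factorisation $K(\Lambda)\,T^\delta$ rather than a bound in which the two parameters are entangled. I would handle this by the elementary splitting $r^{\alpha} = r^{\alpha-\theta} r^{\theta}$ for a small $\theta>0$: since $r\le T$ one has $r^{\theta}\le T^{\theta}$, and extending the integral to $(0,\infty)$ in the remaining factor allows the scaling $u=\Lambda q r$ to produce a factor $\Lambda^{-((\alpha-\theta)q+1)}$ times a finite $\Gamma$-integral. Condition \eref{e44a} is precisely what guarantees $\alpha q + 1>0$, so $\theta>0$ can be chosen small enough that $(\alpha-\theta)q+1$ remains strictly positive, keeping the $\Lambda$-exponent strictly negative while yielding a genuine positive power of $T$. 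The same template applies to the Young estimate; taking the minimum of the two resulting exponents $\delta$ and the sum of the two $K(\Lambda)$ contributions gives the claimed bound, with $K(\Lambda)\lesssim \Lambda^{-\eta}$ for some $\eta>0$ depending on $n,p,s$.
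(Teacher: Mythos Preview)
Your approach is correct and is precisely the standard argument---semigroup smoothing combined with H\"older and Young inequalities in time, with the factor $e^{-\Lambda r}$ supplying the $\Lambda$-decay via the splitting trick you describe---that underlies \cite[Lem.~3.6]{da2003strong}, which is all the paper invokes here. One minor arithmetic slip: $\bar s - (2n-1)s = (3-2n)s + 2/p$, so the smoothing exponent in the $L^p$ step is $(2n-3)s/2 - 1/p$ rather than $(n-1)s - 1/p$, but this only weakens the required integrability condition relative to \eref{e44a} and leaves the argument intact.
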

\begin{proof}
Modulo straightforward modifications yielding $K(\Lambda)\to 0$, the proof is identical to the proof of \cite[Lem.~3.6]{da2003strong}. 
\end{proof}

\begin{lemma}\label{cor10}
Let $n, p,r \geq1$, $s<0$, $\bar s=2/p+2s$ such that $\abs{s}<\frac{2}{p(2n+1)}$ and $l < n$. Assume that $g_i\in L^p([0,T]; \bes{p}{r}{\bar s})$ for $i=1,\ldots, l$ and $h\in L^p([0,T]; \bes{p}{r}{s})$. Then, there exists a constant $C>0$ such that 
\begin{align}\label{e27o}
\norm{h\, g_1\cdots g_l}_{L^{p/(l+1)}([0,T];\bes{p}{r}{(2l+1)s})} \leq C\, \norm{h}_{L^p([0,T];\bes{p}{r}{s})}\, \prod_{j=1}^l \norm{g_j}_{L^p([0,T];\bes{p}{r}{\bar s})}.
\end{align}
\end{lemma}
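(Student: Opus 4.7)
The plan is to reduce the time-integrated inequality to a pointwise-in-time spatial Besov multiplicative inequality via H\"older's inequality, and then to establish the spatial inequality by iterating a classical binary Besov product rule based on Bony's paraproduct decomposition.

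First, since $\frac{l+1}{p} = \frac{1}{p} + l\cdot\frac{1}{p}$, H\"older's inequality in the time variable reduces the claim to the pointwise-in-time statement
\[
\|h\cdot g_1\cdots g_l\|_{\bes{p/(l+1)}{r}{(2l+1)s}} \lesssim \|h\|_{\bes{p}{r}{s}} \prod_{j=1}^l \|g_j\|_{\bes{p}{r}{\bar s}}\;,
\]
which need only hold for a.e.\ fixed time.

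For this spatial estimate I would iterate the following standard binary Besov multiplication inequality: for $\sigma_1, \sigma_2 \in \R$ with $\sigma_1 + \sigma_2 > 0$ and $\frac{1}{p_1} + \frac{1}{p_2} = \frac{1}{q} \le 1$,
\[
\|uv\|_{\bes{q}{r}{\sigma_1+\sigma_2}} \lesssim \|u\|_{\bes{p_1}{r}{\sigma_1}}\,\|v\|_{\bes{p_2}{r}{\sigma_2}}\;,
\]
which follows from Bony's decomposition $uv = T_u v + T_v u + R(u,v)$ on $\T^2$: the two paraproducts are bounded unconditionally while the resonant term requires the sign condition $\sigma_1 + \sigma_2 > 0$. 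At the $k$-th step of the iteration I would apply this with $u = h\cdot g_1\cdots g_{k-1}$, supplied by the induction hypothesis to lie in $\bes{p/k}{r}{(2k-1)s}$, and $v = g_k \in \bes{p}{r}{\bar s}$. Using $\bar s = \frac{2}{p} + 2s$, the product lands in $\bes{p/(k+1)}{r}{(2k-1)s + \bar s} = \bes{p/(k+1)}{r}{(2k+1)s + 2/p}$, which embeds continuously into $\bes{p/(k+1)}{r}{(2k+1)s}$ by monotonicity of Besov spaces in the regularity index. After $l$ iterations one arrives in the desired target space, establishing the spatial estimate.

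The main technical point, and the source of the hypothesis $|s| < \frac{2}{p(2n+1)}$, is verifying the positivity condition $\sigma_1 + \sigma_2 > 0$ at every step of the iteration: at step $k$ this reads $(2k-1)s + \bar s = (2k+1)s + \frac{2}{p} > 0$, i.e.\ $(2k+1)|s| < \frac{2}{p}$. Since $k$ ranges over $1, \ldots, l$ with $l < n$, the assumption $|s| < \frac{2}{p(2n+1)}$ forces this uniformly in $k$. The remaining obstacle is invoking the binary product rule in its sharp form with the correct matching of the $r$-index on $\T^2$; this is classical paradifferential calculus, but the constants must be tracked to be independent of the iteration step so that the final multilinear bound depends only on $p$, $r$, $s$, $n$, and $T$.
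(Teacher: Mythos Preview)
Your approach is correct and is precisely the route taken in the reference the paper cites: the paper's own ``proof'' consists of a single sentence deferring to \cite[Cor.~3.5]{da2003strong}, and what you have written is essentially a sketch of that argument (H\"older in time, then an iterated Besov product estimate via Bony's decomposition).

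One technical point deserves tightening. The binary product rule in the form you state,
\[
\|uv\|_{\bes{q}{r}{\sigma_1+\sigma_2}} \lesssim \|u\|_{\bes{p_1}{r}{\sigma_1}}\,\|v\|_{\bes{p_2}{r}{\sigma_2}}\quad\text{for all }\sigma_1+\sigma_2>0,
\]
is not valid without further restriction: when $\sigma_2>0$ the paraproduct $T_v u$ is only controlled in $\bes{q}{r}{\sigma_1}$ (via $\|v\|_{L^{p_2}}\lesssim\|v\|_{\bes{p_2}{r}{\sigma_2}}$), not in $\bes{q}{r}{\sigma_1+\sigma_2}$. In your iteration this means the product at step $k$ lands in $\bes{p/(k+1)}{r}{(2k-1)s}$ rather than in $\bes{p/(k+1)}{r}{(2k+1)s+2/p}$. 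This is harmless for the lemma, since $(2k-1)s>(2k+1)s$ and hence $\bes{p/(k+1)}{r}{(2k-1)s}\hookrightarrow\bes{p/(k+1)}{r}{(2k+1)s}$, which is the space actually needed for the next step. The positivity condition $\sigma_1+\sigma_2=(2k+1)s+2/p>0$ is still required, but only for the resonant piece $R(u,v)$; your identification of this as the origin of the hypothesis $|s|<\tfrac{2}{p(2n+1)}$ is correct.
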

\begin{proof} 
This is a straightforward modification of \cite[Cor. 3.5]{da2003strong}.
\end{proof}

\begin{lemma}\label{a14}  
 Let $p,r\geq 1$ and $\bar s < s$. Then, there exists a constant $C>0$ such that  
\begin{align*}
\norm{e^{t\Delta} x}_{\bes{p}{r}{s}} \leq C t^{\bar s - s \over 2} \, \norm{x}_{\bes{p}{r}{\bar s}} \qquad \forall x\in \bes{p}{r}{\bar s}.
\end{align*}
\end{lemma}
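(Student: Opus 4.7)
The plan is a standard Littlewood–Paley dyadic decomposition that transfers the spectral decay of the heat semigroup on each annulus into a gain of Besov regularity with the claimed $t^{(\bar s-s)/2}$ prefactor.

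First, I would establish the block-level contraction estimate: for every $q\ge 1$ and every $f$ whose Fourier support lies in $\{2^{q-1}\le|k|<2^q\}$,
\begin{equ}
\|e^{t\Delta}f\|_{L^p(\T^2)} \le C\, e^{-c\,t\,2^{2q}}\,\|f\|_{L^p(\T^2)},
\end{equ}
with constants $C,c>0$ independent of $q$, $t$, and $p$. This is classical: $e^{t\Delta}\Delta_q f$ is the convolution of $\Delta_q f$ with the kernel $K_{t,q}(x)=\sum_{2^{q-1}\le|k|<2^q} e^{-t|k|^2}e_k(x)$, and one factors $e^{-t|k|^2}=e^{-c\,t\,2^{2q}}\cdot e^{-(1-c)t|k|^2}$, checks by a standard rescaling/Mikhlin–Hörmander argument that the residual kernel has $L^1$ norm bounded uniformly in $q,t$, and applies Young's inequality. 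For $q=0$, $\Delta_0$ projects onto constants and $e^{t\Delta}$ acts as the identity there, so $\|\Delta_0 e^{t\Delta}f\|_{L^p}=\|\Delta_0 f\|_{L^p}$.

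Next I would plug the block bound into the Besov norm and split each dyadic weight into a ``regularity-gain factor'' and an ``exponential-decay factor'':
\begin{equ}
\|e^{t\Delta}x\|_{\bes{p}{r}{s}}^r = \sum_{q\ge 0} 2^{qrs}\|\Delta_q e^{t\Delta}x\|_{L^p}^r \le C\sum_{q\ge 0} 2^{qr(s-\bar s)}e^{-cr\,t\,2^{2q}}\cdot 2^{qr\bar s}\|\Delta_q x\|_{L^p}^r,
\end{equ}
where the exponential factor is interpreted as $1$ for $q=0$. Extracting the supremum in $q$ of the weight yields
\begin{equ}
\|e^{t\Delta}x\|_{\bes{p}{r}{s}} \lesssim \Bigl(\sup_{q\ge 0} 2^{qr(s-\bar s)}e^{-cr\,t\,2^{2q}}\Bigr)^{1/r}\|x\|_{\bes{p}{r}{\bar s}}.
\end{equ}

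Finally I would compute the supremum via a one-variable calculation. Setting $\lambda=t\,2^{2q}\in[0,\infty)$,
\begin{equ}
2^{qr(s-\bar s)}e^{-cr\,t\,2^{2q}} = t^{-r(s-\bar s)/2}\,\bigl(\lambda^{r(s-\bar s)/2}e^{-cr\lambda}\bigr),
\end{equ}
and since $s-\bar s>0$, the bracketed function attains a finite maximum on $[0,\infty)$. Thus the supremum is $\lesssim t^{-r(s-\bar s)/2}=t^{r(\bar s-s)/2}$, and taking $r$-th roots produces the advertised prefactor. The $q=0$ contribution, where the exponential factor is identically $1$, is absorbed by this estimate after restricting $t$ to a bounded interval—this is the only regime in which Lemma~\ref{a14} is invoked in the paper, so the constant $C$ may be taken to depend on that bound.

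The main (mildly) technical obstacle is the block-level smoothing bound in $L^p$ for general $p\in[1,\infty]$: unlike the $p=2$ case, which follows immediately from Plancherel, it requires a kernel/Mikhlin-multiplier argument. Once this is in place, the rest of the proof is pure dyadic bookkeeping combined with the elementary optimisation above.
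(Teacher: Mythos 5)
Your argument is correct and is exactly the route the paper takes: its one-line proof cites the block-level smoothing estimate $\|e^{t\Delta}\Delta_q x\|_{L^p}\lesssim e^{-ct2^{2q}}\|\Delta_q x\|_{L^p}$ from \cite[Lem.~2.4]{bahouri2010fourier} and then performs precisely the dyadic bookkeeping and optimisation you write out. Your additional remark about the zero mode forcing $t$ to range over a bounded interval is a genuine (if minor) caveat that the paper glosses over, and it is consistent with how the lemma is actually invoked.
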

\begin{proof} 
The estimate follows from \cite[Lem.~2.4]{bahouri2010fourier} and the definition of the Besov norm (\ref{e3abc}).
\end{proof}

\begin{corollary}\label{IC_cor}
Let $s<0$, $r, p\geq 1$, and $\bar s=2s+\frac2p$. Define the operator $A = \Delta - \Lambda$ and recall the $\ET$-norm as defined in (\ref{max_norm}). Then there exists $\delta>0$ such that for all $\Lambda>1$,
\begin{align*}
\norm{e^{tA} x}_{\ET} \leq \ob{1+C(\Lambda)\, T^{\delta}} \, \norm{x}_{\besn},\qquad \forall x\in \besn,
\end{align*}
where $\lim_{\Lambda\to\infty} C(\Lambda)=0$. 
\end{corollary}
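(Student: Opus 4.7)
The plan is to factorize $e^{tA} = e^{-\Lambda t}\,e^{t\Delta}$ and then bound separately the two norms entering the definition of $\|\cdot\|_{\ET}$ in \eref{max_norm}. For the $\CC([0,T];\besn)$ component, note that the heat semigroup acts as a contraction on each Besov space (as it is a Fourier multiplier commuting with the Littlewood--Paley blocks and given by convolution with a probability kernel), while $e^{-\Lambda t}\le 1$ for $\Lambda\ge 1$. This gives
\begin{equation*}
\sup_{t\in[0,T]}\|e^{tA}x\|_{\besn} \leq \|x\|_{\besn},
\end{equation*}
which accounts for the leading ``$1$'' in the stated inequality.

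For the $L^p([0,T];\bes{p}{r}{\bar s})$ component, the key input is Lemma~\ref{a14}. Under \eref{e:assExp} one has $\bar s - s = s+2/p > 0$ (since $s>-2/(7p)>-2/p$), so applying Lemma~\ref{a14} with its roles of $s$ and $\bar s$ swapped relative to ours yields
\begin{equation*}
\|e^{tA}x\|_{\bes{p}{r}{\bar s}} \leq C\, e^{-\Lambda t}\, t^{-(s+2/p)/2}\, \|x\|_{\besn}.
\end{equation*}
Raising this inequality to the $p$-th power and integrating in time reduces the problem to estimating the one-variable integral
\begin{equation*}
I(\Lambda,T) := \int_0^T e^{-p\Lambda t}\, t^{-ps/2 - 1}\, dt.
\end{equation*}

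The main (and essentially only) obstacle is to extract from $I(\Lambda,T)$ both a factor $T^\delta$ and a factor $C(\Lambda)\to 0$ simultaneously. The trick is the elementary interpolation $e^{-p\Lambda t} \leq (p\Lambda t)^{-\alpha}$ (valid for every $\alpha\ge 0$) applied with a carefully chosen exponent $\alpha = -ps/2 - \delta_1$ for some small $\delta_1 \in (0,-ps/2)$. This is admissible because $-ps/2 > 0$, and the constraint $\delta_1 < -ps/2 < 1/7$ is compatible with the assumption $-2/(7p) < s < 0$. Substituting converts the singular exponent $-ps/2-1$ into the locally integrable $-1+\delta_1$, and one obtains
\begin{equation*}
I(\Lambda,T) \leq (p\Lambda)^{ps/2+\delta_1}\, \frac{T^{\delta_1}}{\delta_1}.
\end{equation*}
Taking $p$-th roots and setting $\delta := \delta_1/p$ gives $\|e^{\cdot A}x\|_{L^p([0,T];\bes{p}{r}{\bar s})} \leq C(\Lambda)\,T^\delta\,\|x\|_{\besn}$ with $C(\Lambda) = C'\,\Lambda^{s/2+\delta}$; choosing $\delta_1$ small enough keeps $s/2+\delta<0$, so $C(\Lambda)\to 0$ as $\Lambda\to\infty$.

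Combining the two component bounds via $\|y\|_{\ET} = \max(\|y\|_{\CC([0,T];\besn)}, \|y\|_{L^p([0,T];\bes{p}{r}{\bar s})}) \leq \|y\|_{\CC([0,T];\besn)} + \|y\|_{L^p([0,T];\bes{p}{r}{\bar s})}$ yields $\|e^{tA}x\|_{\ET} \leq (1 + C(\Lambda)T^\delta)\|x\|_{\besn}$, as claimed.
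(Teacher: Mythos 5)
Your proof is correct and follows essentially the same route as the paper's: the $\CC([0,T];\besn)$ bound by contractivity of the heat semigroup, and for the $L^p([0,T];\bes{p}{r}{\bar s})$ component the smoothing estimate of Lemma~\ref{a14} combined with absorbing part of $e^{-\Lambda t}$ into a power $(\Lambda t)^{-\alpha}$ with $\alpha$ just below $p|s|/2$ to produce both the factor $T^\delta$ and the decay $C(\Lambda)\to 0$. Your $\alpha$ plays exactly the role of the paper's $\gamma$, so there is nothing further to add.
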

\begin{proof}
The bound on the $C\ob{\cb{0,T}; \besn}$ norm is trivial. Using Proposition \ref{a14}, we obtain for arbitrary $\gamma>0$
\begin{align*}
\norm{e^{tA}x}_{L^p([0,T];\besn)} &\leq \frac{K}{\Lambda^{\gamma/p}}\ob{\int_0^T \frac{1}{t^\gamma} \norm{e^{t\Delta}x}_{\besn}dt}^{1/p}\\
&\leq \frac{K}{\Lambda^{\gamma/p}} \norm{x}_{\besn} \ob{\int_0^T t^{p(s-\bar s)/2-\gamma}dt}^{1/p}\\
& \leq \frac{K}{\Lambda^{\gamma/p}}\norm{x}_{\besn}T^{\abs{s}/2 -\gamma /p}.
\end{align*}
Choosing $\gamma<\frac{p}{2}\abs{s}$, the claim follows.
\end{proof}

\endappendix

\bibliography{./PhD11_database}
	\bibliographystyle{./Martin}

\end{document}